\documentclass[12pt,oneside]{amsart}
\usepackage{amsfonts, amsthm, amsmath, amssymb, ,mathtools} 
\usepackage[backend=bibtex, style=alphabetic]{biblatex} 
\usepackage[colorlinks=true, citecolor=blue, backref=none]{hyperref}  
\usepackage{bbm} 
\usepackage{upgreek} 
\usepackage{url}
\usepackage[hyphenbreaks]{breakurl}
\usepackage{tikz-cd} 
\usepackage[scr=boondox,cal=cm]{mathalpha} 
\usepackage[layout=a4paper,right=2cm,left=2cm]{geometry}
\numberwithin{equation}{section}
\usepackage{enumitem} 
\newcounter{dummy}
\makeatletter
\newcommand\myitem[1][]{\item[#1]\refstepcounter{dummy}\def\@currentlabel{#1}}
\makeatother
\bibliography{biblio}

\theoremstyle{plain}
\newtheorem{theorem}{Theorem}[section]
\newtheorem{proposition}[theorem]{Proposition}
\newtheorem{corollary}[theorem]{Corollary}
\newtheorem{lemma}[theorem]{Lemma}
\newtheorem{definition}[theorem]{Definition}

\theoremstyle{remark}
\newtheorem{remark}[theorem]{Remark}

\newcommand{\Z}{\mathbb{Z}} 
\newcommand{\Q}{\mathbb{Q}} 
\newcommand{\OO}[2]{\mathcal{O}^{#2}_{#1}} 
\newcommand{\cl}[2]{h^{#2}_{#1}} 
\newcommand{\units}[2]{\left(\OO{#1}{#2}\right)^\times} 
\newcommand{\tr}[1]{Tr_{#1}} 
\newcommand{\diff}[1]{\mathfrak{D}_{#1}} 
\newcommand{\cond}[1]{\mathfrak{f}_{#1}} 
\newcommand{\disc}[1]{\mathfrak{d}_{#1}} 
\DeclarePairedDelimiterXPP{\hht}[3]{\widehat{h}^{#1}}{(}{)}{}{#2,#3} 
\DeclarePairedDelimiterXPP{\hh}[3]{h^{#1}}{(}{)}{}{#2,#3} 
\DeclarePairedDelimiterXPP{\HHT}[3]{\widehat{H}^{#1}}{(}{)}{}{#2,#3} 
\DeclarePairedDelimiterXPP{\HH}[3]{H^{#1}}{(}{)}{}{#2,#3} 
\DeclarePairedDelimiterXPP{\HHE}[3]{H_{\acute{e}t}^{#1}}{(}{)}{}{#2,#3} 
\DeclarePairedDelimiterXPP{\HHM}[3]{H_{M}^{#1}}{(}{)}{}{#2,#3} 
\newcommand{\Ext}[4]{\mathrm{Ext}^{#1}_{#2}\left(#3,#4\right)} 
\newcommand{\hq}[2]{\vartheta(#1,#2)} 
\newcommand{\rp}[2][E]{#1(#2)} 
\newcommand{\rpr}[2][E]{\rk{#1(#2)}} 
\newcommand{\tn}[2][E]{C(#1/#2)} 
\newcommand{\rk}[2][\Z]{\mathrm{rk}_{#1}\left(#2\right)} 
\DeclareFontFamily{U}{wncy}{}
\DeclareFontShape{U}{wncy}{m}{n}{<->wncyr10}{}
\DeclareSymbolFont{mcy}{U}{wncy}{m}{n}
\DeclareMathSymbol{\Sh}{\mathord}{mcy}{"58}
\newcommand{\burnside}[2]{B_#1(#2)} 
\newcommand{\repring}[2]{R_#1(#2)} 
\newcommand{\ts}[2][E]{\Sh(#1/#2)} 
\newcommand{\bexp}[1]{\upgamma_{#1}} 
\newcommand{\Hom}{\mathrm{Hom}} 
\newcommand{\Ker}{\mathrm{Ker}} 
\newcommand{\Coker}{\mathrm{Coker}} 
\newcommand{\dual}[1]{#1^*} 
\newcommand{\ddual}[1]{#1^{**}} 
\newcommand{\pdual}[1]{#1^\vee} 
\newcommand{\tors}[1]{#1_{\mathrm{tors}}} 
\newcommand{\mt}[1]{\overline{#1}} 
\newcommand{\rc}[3][]{\mathcal{C}^{#1}_{#2}\!\left(#3\right)} 
\newcommand{\qindex}[1]{q\left(#1\right)} 
\newcommand{\card}[1]{\left|#1\right|} 
 
\newcommand{\kdt}[3]{\widehat{\psi}^{#1}\left(#2,#3\right)} 
 
\newcommand{\roots}[1]{\mu_{#1}} 
\newcommand{\defect}[2]{\beta_{#1}(#2)} 
\newcommand{\reg}[2]{R_{#1}^{#2}} 

\newcommand{\rcfieldext}{\mathscr{L}}
\newcommand{\Rho}{\mathrm{P}} 

\newcommand{\ie}{i.~e.~} 

\begin{document}
\title{\textsc{Regulator Constants and Cohomology}}
\author{Luca Caputo}
\email{lc.lucacaputo@gmail.com}
\subjclass[2020]{Primary: 11R33; Secondary: 20C10, 11R34, 11R29, 11R20, 11R70, 11G05}
\begin{abstract}
We show how regulator constants of a finitely generated $\Z[G]$-module can be related to  $G$-cohomology, where $G$ is a finite group. We then derive consequences of such relation for modules naturally arising in number theory, such as ring of integers and units of number fields, $K$-theory groups of ring of integers and Mordell-Weil groups of elliptic curves.
\end{abstract}

\maketitle
\section{Introduction}
Let $G$ be a finite group. Recall that a formal sum $\Theta= \sum_{H\leq G}n_H H$, $n_H\in \Z$, is a $G$-Brauer relation (or simply a Brauer relation) if 
\[
\bigoplus_{n_H >0} \Q[G/H]^{n_H} \cong \bigoplus_{n_H <0} \Q[G/H]^{n_H}
\]  
as $\Q[G]$-modules. The $\Theta$-regulator constant $\rc{\Theta}{M}\in \Q^\times$ of finitely generated $\Z[G]$-module $M$ is an invariant, introduced by T. and V. Dokchitser in \cite{DokchitserDokchitser2010}, which is essentially a ratio of determinants of a bilinear form on submodules of fixed elements $M^H$, raised to the power $n_H$, as $H$ varies over the subgroups of $G$.  The purpose of this paper is to investigate the relation between $\rc{\Theta}{M}$ and the cohomology of $M$.

Since regulator constants are defined in terms of submodules fixed elements $M^H$, it is natural to expect some connection with cohomology, which is the derived functor of $M \mapsto M^H$. For instance cohomology appears when measuring the defect for regulator constants from being multiplicative in exacts sequences (see \cite[Proposition 2.12]{Caputo2025}). An even more explicit connection between regulator constants and cohomology appears in the following arithmetic context. Let $L/k$ be a $G$-Galois extension of number fields. When $G$ is the dihedral group $D=D_{2q}$ of order $2q$ with $q$ odd, putting together \cite[Proposition 2.15]{Bartel2012} and \cite[Theorem 3.14]{CaputoNuccio2019}, one can easily get the following formula
\begin{equation}\label{eq:rccohomunitsdihedral}
\rc{\Theta_{D}}{\mathcal{O}_L^\times} = \left(\hht{0}{\Theta_{D}}{\mathcal{O}_L^\times}\hht{1}{\Theta_{D}}{\mathcal{O}_L^\times}\right)^{-1}.
\end{equation}
Here $\mathcal{O}_L^\times$ is the group of units of $L$ and 
\[
\Theta_{D} = 1 + 2D - \Rho - 2\Sigma
\]
is a $D$-Brauer relation, where $\Rho \triangleleft D$ is the subgroup of index $2$ and $\Sigma<D$ is any subgroup of order $2$. Moreover, for $i\in\Z$ and $H\leq G$, $\hht{i}{H}{M}$ is the order of the $i$-th Tate cohomology group $\HHT{i}{H}{M}$ of $H$ with values in $M$ and
\[
\hht{i}{\Theta}{M} = \prod_{H\leq G} \hht{i}{H}{M}^{n_H}.
\]

The main result of this paper is a generalization of \eqref{eq:rccohomunitsdihedral} to any group $G$, any finitely generated $\Z[G]$-module $M$ and any $G$-Brauer relation $\Theta$. If $M$ has no $\Z$-torsion, this generalization reads
\begin{equation}\label{eq:rccohomgeneraltf}
\rc{\Theta}{M}^{2m} = \rc{\Theta}{V}^{-1}\left(\hht{0}{\Theta}{M}\hht{1}{\Theta}{M}\right)^{-2m}
\end{equation}
where $V$ is a finite $G$-module which is self-dual, \ie $V$ is isomorphic to $\pdual{V}=\Hom(V,\Q/\Z)$ as $\Z[G]$-modules, and $m$ is a positive integer (see Proposition \ref{cor:rcformulatf}). For modules with $\Z$-torsion the formula has some additional factors but one still has the factor $\rc{\Theta}{V}$ with $V$ as above (see Theorem \ref{th:rcformula}).  

The proof of \eqref{eq:rccohomgeneraltf}, which is given in Section \ref{sec:main}, is obtained by studying the regulator constant of the $\Z$-dual $\dual{M}= \Hom(M,\Z)$ of $M$. Exploiting an alternative definition of the regulator constants due to Bartel and de Smit (see \cite[Lemma 3.1]{BarteldeSmit2013}), we first show that
\begin{equation}\label{eq:introrc1}
\rc{\Theta}{M}\rc{\Theta}{\dual{M}} = \hht{0}{\Theta}{M}^{-2}.
\end{equation}
To get \eqref{eq:rccohomgeneraltf} we then show that the following equality also holds 
\begin{equation}\label{eq:introrc2}
\rc{\Theta}{\dual{M}}^m = \rc{\Theta}{M}^m\rc{\Theta}{V}\hht{1}{\Theta}{M}^{2m}.
\end{equation}
where $m$ and $V$ are as above. The module $V$, which somehow accounts for the distance of $M$ from being a permutation module (\ie $M=\Z[X]$ for some finite $G$-set $X$), is constructed as follows. Assume for simplicity that $M$ has no $\Z$-torsion, by Artin's induction theorem, one can always find a positive integer $m$, $G$-permutation modules $A_1$ and $A_2$ and an injective $\Z[G]$ homomorphism $f:M^m\oplus A_1 \to A_2$ with finite cokernel. Then, identifying $A_i$ with $\dual{A}_i$, we can see $\dual{f}\circ f$ as an endomorphism of $M^m\oplus A_1$ with finite cokernel $V$. In particular $V$ is trivial if $f$ is an isomorphism (this is the case in particular if $M$ is a permutation module). 

However one can have $\rc{\Theta}{V}=1$ regardless of whether there exists an isomorphism $f$ as above. We show in Section \ref{sec:dihedral} that in the dihedral case one has $\rc{\Theta_{D}}{W}=1$ for every finite $D$-module with $W\cong \pdual{W}$ and in fact
\begin{equation}\label{eq:mainintro}
\rc{\Theta_{D}}{M} = \left(\hht{0}{\Theta_{D}}{M}\hht{1}{\Theta_{D}}{M}\right)^{-2}
\end{equation}
for \textit{any} $\Z[D]$-module $M$ (possibly with $\Z$-torsion). This direct relation between regulator constants and cohomology can be used to derive explicit bounds on the possible values of the regulator constants, in terms of the $\Z$-ranks of $M^{D}$ and $M^{H}$ for $H\leq \Rho$ together with the order of $\tors{M^D}/q$ and $\tors{M^\Rho}/q$ (see Theorem \ref{th:bounds}).

In the last section of the paper we apply the main results of the previous sections to various arithmetic contexts. As above, let $L/k$ be a $G$-Galois extension of number fields and let $\OO{L}{}$ be the ring of integers of $L$. Yokoi proved that, when $G$ is cyclic, $\hht{i}{G}{\OO{L}{}}$ does not depend on $i\in\Z$ and conjectured that this was also the case when $G$ is not cyclic (see \cite{Yokoi1964}, \cite{Rosen1966}). The conjecture was disproved by Lee and Madan (see \cite{LeeMadan1969}): they showed that $\hht{1}{G}{\OO{L}{}}=1$ and $\hht{0}{G}{\OO{L}{}}\ne 1$ when $k=\Q$ and $L=\Q(\zeta_3, \sqrt[3]{2})$ with $\zeta_3$ a primitive third root of unity (so $G=D_{2q}$ with $q=3$). Our results can be used to shed more light on this question. In fact, by a classical result of Fr\"ohlich, $\OO{N}{}$ is factor equivalent to a free $\Z[G]$-module. This can be shown to be equivalent to 
\[
\rc{\Theta}{\OO{L}{}} = 1 
\] 
for any $G$-relation $\Theta$. Thus, when $G=D$, by \eqref{eq:mainintro} we obtain a relation between the cohomology groups of $\OO{L}{}$ which turns out to be enough to show that
\[
\hht{i}{D}{\OO{L}{}} = \hht{j}{D}{\OO{L}{}} \quad \text{if $i+j\equiv -1 \pmod 4$}
\]
(recall that the cohomology of $D$ is periodic of period $4$).

The regulator constant $\rc{\Theta}{\mathcal{O}_L^\times}$ is more interesting, since it is related to the special values of Dedekind zeta functions at $0$. In fact one can show that the following equality holds (see \cite[(2.1)]{BarteldeSmit2013})
\[
\rc{\Theta}{\mathcal{O}_L^\times}\rc{\Theta}{\Z}^{-1}\rc{\Theta}{\Z[S_\infty]} = \prod_{H\leq D} \left(\frac{\card{\roots{L^H}}}{\cl{L^H}{}}\right)^{2n_H}
\]
where $\roots{L^H}$ and $\cl{L^H}{}$ denote the group of roots of unity and the class number of $L^H$, respectively, and $S_\infty$ is the set of archimedean places of $L$. Restricting to the case $G=D$ and applying \eqref{eq:mainintro} to $M=\mathcal{O}_L^\times$, we obtain another proof of the main results of \cite{CaputoNuccio2019}, namely an equality 
\[ 
\frac{\cl{L}{}\cl{k}{2}}{\cl{F}{}\cl{K}{2}} = \frac{\hht{0}{D}{\mathcal{O}_L^\times}}{\hht{-1}{D}{\mathcal{O}_L^\times}}\frac{\hht{-1}{\Sigma}{\mathcal{O}_L^\times}}{\hht{0}{\Sigma}{\mathcal{O}_L^\times}} 
\]
and explicit bounds on the possible values of the class numbers ratio. We will also briefly sketch a proof of a similar result for algebraic $K$-groups of $\OO{L}{}$, which are related to special values of the Dedekind zeta function at negative integers.    

Another analogous situation where the above idea can be applied is the case of an elliptic curve $E$ defined over $k$. Assuming the finiteness of the Tate-Shafarevic group $\ts{L^H}$ for $H\leq G$, one has
\[
\rc{\Theta}{\rp{L}} = \prod_{H\leq D}\left(\frac{\card{\tors{\rp{L^H}}}^2}{\tn{L^H}\card{\ts{L^H}}}\right)^{n_H}
\]	
where $\tn{L^H}$ denotes the product of suitably normalised Tamagawa numbers over all finite places of $L^H$  and $\tors{\rp{L^H}}$ is the torsion subgroup of the $L^H$-rational points of $E$ (see \cite[Section 2.4]{BarteldeSmit2013}). Again restricting to the case $G=D$ and applying \eqref{eq:mainintro} to $M=\rp{L}$, we get 
\begin{equation}\label{eq:ecfintro}
\prod_{H\leq D_{2q}}\left(\frac{\card{\tors{\rp{L^H}}}^2}{\tn{L^H}\card{\ts{L^H}}}\right)^{n_H}= \left(\hht{0}{\Theta}{\rp{L}}\hht{1}{\Theta}{\rp{L}}\right)^{-1}.
\end{equation}
One also obtains information on the possible values of the left-hand side of the above equation in terms of the ranks $\rpr{L^H}$ of $\rp{L^H}$. This is particularly interesting since, contrarily to the case of units and $K$-groups described above, these ranks are hard to compute and there is no known explicit formula relating them to simpler invariants. For instance we use \eqref{eq:ecfintro} to prove that 
\[
\frac{q \rpr{F} -\rpr{L}}{q-1} \equiv v_q(\tn{F})- v_q(\tn{L}) \pmod 2.
\]
This was proved by T. and V. Dokchitser in the particular case where $q$ is a prime (see \cite[Proposition 4.17]{DokchitserDokchitser2010}).

\section{Regulator constants of dual modules}\label{sec:main}
In this section, $G$ is a finite group and $\Theta = \sum n_H H $ is a Brauer relation as in the Introduction. We introduce some notation that will be used throughout the paper. For a finitely generated $\Z[G]$-module $M$, we denote as usual by $M^G$ the submodule of elements of $M$ fixed by $G$ and by $M_G$ the quotient of $M$ by $I_GM$ where $I_G \subset \Z[G]$ is the augmentation ideal. We consider the $\Z$-dual 
\[
\dual{M} = \Hom(M,\Z)
\]
endowed with the standard $G$-action $gf(m) = f(g^{-1}m)$. If $f:M\to N$ is a homomorphism of $\Z[G]$-modules, we denote by $\dual{f}:\dual{N}\to \dual{M}$ the dual homomorphism. Similarly, for a finite $\Z[G]$-module $M$, we consider the $\Q/\Z$-dual
\[
\pdual{M} = \Hom(M,\Q/\Z)
\]
endowed with the same $G$-action as above. Finally, $\card{S}$ denotes the cardinality of a finite set $S$.

The main goal of this section is establishing a relation between regulator constants and cohomology of finitely genetated $\Z[G]$-modules. Regulator constants were introduced T. and V. Dokchitser in \cite{DokchitserDokchitser2010}, although a similar invariant was already studied by Kani in \cite{Kani1994}. Here we will be using a slightly modified definition which simplifies statements, especially in the case where $M$ has torsion (see \cite{Caputo2025}).

\begin{definition}\label{def:rc}
Let $M$ be a finitely generated $\Z[G]$-module and let $\rcfieldext/\Q$ be a field extension. Let $\langle\cdot,\cdot\rangle~:~M~\times~M~\to~\rcfieldext$ be a $\Z[G]$-bilinear pairing that is non-degenerate on $\mt{M}$. Then the regulator constant $\rc{\Theta}{M}$ of $M$ with respect to the $G$-Brauer relation $\Theta=\sum_{H\leq G}n_H H$ is
\[
\rc{\Theta}{M} = \prod_{H\leq G}\left( \frac{1}{\card{\tors{M}^H}^2}\det\left(\frac{1}{\card{H}}\left.\langle\cdot,\cdot\rangle \right|_{\mt{M^H}} \right)\right)^{n_H} \in \Q^\times,
\]   
where the determinant appearing in the factor corresponding to the subgroup $H$ is evaluated on any $\Z$-basis of $\mt{M^H}$.
\end{definition}

Regulator constants do not depend on the bilinear pairing and are positive (since one can always choose a positive definite pairing). We now recall a useful characterisation of regulator constants, due to Bartel-de Smit (a similar result was also obtained previously by Kani). To state it in a more compact way, we will use the $q$-index introduced by Tate. For a homomorphism $f:A\to B$ of abelian groups with finite kernel and cokernel, the $q$-index of $f$ is defined as  
\[
\qindex{f} = \frac{\card{\Coker f}}{\card{\Ker f}}
\]
(see \cite[Chapitre II, \S 6.2]{Tate1984}). The $q$-index enjoys the following elementary properties (for a proof, see \cite[Propositions 3.4 and 3.6]{Kani1994}):
\begin{align}
\qindex{f} &= \qindex{\tors{f}}\qindex{\mt{f}} \label{eq:qtorssplit}\\
\qindex{f} &= \qindex{\dual{f}}\qindex{\tors{f}}^{-1} \label{eq:qdualsplit}.
\end{align}
Here $\tors{f}:\tors{A}\to\tors{B}$ and $\mt{f}:\mt{A}\to\mt{B}$ denote the homomorphims induced by $f$ on the torsion subgroup $\tors{A}$ of $A$ and on the torsion-free quotient $\mt{A}=A/\tors{A}$ of $A$, respectively. 

We set 
\[
P_1 = \bigoplus_{\{H\leq G: n_H > 0\}} \Z[G/H]^{n_H}\quad \text{and}\quad P_2 = \bigoplus_{\{H\leq G: n_H < 0\}} \Z[G/H]^{-n_H},
\] 
so that $P_1\otimes \Q$ is $\Q[G]$-isomorphic to $P_2\otimes \Q$ since $\Theta=\sum n_H H $ is a Brauer relation. In particular there exists an injective homomorphism $\phi:P_1\to P_2$ with finite cokernel. Then the dual homomorphism $\dual{\phi}:\dual{P_2}\to \dual{P_1}$ also has finite cokernel. 

\begin{proposition}[Bartel-de Smit]\label{prop:rcalt} 
Let $\phi:P_1\to P_2$ be any injective homomorphism with finite cokernel. Then, for any finitely generated $\Z[G]$-module $M$, we have
\[
\rc{\Theta}{M} = \frac{\qindex{\left(\phi\otimes \mathrm{id}_M\right)^G}}{\qindex{\left(\dual{\phi}\otimes \mathrm{id}_M\right)^G}}.
\]
\end{proposition}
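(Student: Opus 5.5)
The plan is to compare both sides through explicit lattices. First I identify the $G$-invariants involved. For a permutation module $\Z[G/H]$ there is a canonical isomorphism $(\Z[G/H]\otimes M)^G\cong M^H$, sending $m\mapsto \sum_{gH} gH\otimes gm$. Hence
\[
(P_1\otimes M)^G\cong \bigoplus_{n_H>0}(M^H)^{n_H},\qquad (P_2\otimes M)^G\cong \bigoplus_{n_H<0}(M^H)^{-n_H}.
\]
I identify $\dual{P_i}$ with $P_i$ via the standard basis, so that $\dual{\phi}$ becomes a map $P_2\to P_1$. Then $(\phi\otimes\mathrm{id}_M)^G$ and $(\dual{\phi}\otimes\mathrm{id}_M)^G$ are maps between these two sums of fixed modules in opposite directions. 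Both maps have finite kernel and cokernel, since after tensoring with $\Q$ they are isomorphisms. So the $q$-indices make sense.

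Next I reduce to the torsion-free part, using \eqref{eq:qtorssplit}. The torsion contribution to the ratio is $\qindex{\tors{(\phi\otimes 1)^G}}/\qindex{\tors{(\dual\phi\otimes 1)^G}}$, and $\tors{(P_i\otimes M)^G}=(P_i\otimes\tors M)^G$. So I must show this ratio equals $\prod_H\card{\tors M^H}^{-2n_H}$.

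Here I would use that $\tors M$ is finite. For finite groups, the $q$-index of a map between groups of the same order is $1$. The orders here are $\prod_{n_H>0}\card{\tors M^H}^{n_H}$ and $\prod_{n_H<0}\card{\tors M^H}^{-n_H}$. So each torsion $q$-index is a ratio of target order to source order, and the two maps go in opposite directions. The ratio is therefore $\bigl(\prod_{n_H<0}/\prod_{n_H>0}\bigr)^2=\prod_H\card{\tors M^H}^{-2n_H}$, which is exactly the torsion factor in Definition~\ref{def:rc}.

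Now let $M$ be torsion-free, with a $G$-invariant positive definite pairing $\langle\cdot,\cdot\rangle$. I put on $(P_i\otimes M)^G$ the pairing induced from the standard pairing on $P_i$ tensored with $\langle\cdot,\cdot\rangle$. Under the identification with $M^H$, an element of $(\Z[G/H]\otimes M)^G$ corresponds to $m\in M^H$, and its self-pairing is $[G:H]\langle m,m\rangle=\card{G}\cdot\frac{1}{\card H}\langle m,m\rangle$. So the Gram determinant of the lattice $(P_1\otimes M)^G$ equals $\prod_{n_H>0}\bigl(\card{G}^{r_H}\det(\tfrac1{\card H}\langle\cdot,\cdot\rangle|_{M^H})\bigr)^{n_H}$, with $r_H=\rk{M^H}$, and similarly for $P_2$.

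The key observation is that $\dual\phi\otimes 1$ is the adjoint of $\phi\otimes1$ for these pairings, and adjointness survives restriction to $G$-invariants. For lattices $A,B$ with pairings, an injective map $f:A\to B$ of full rank and its adjoint $f^t:B\to A$ satisfy
\[
\qindex{f}^2\,\mathrm{disc}(B)=\mathrm{disc}(A)\,\det(f^tf),\qquad \qindex{f^t}=\qindex{f}^{-1}\det(f^tf).
\]
Dividing gives $\qindex{f}/\qindex{f^t}=\qindex{f}^2/\det(f^tf)=\mathrm{disc}(A)/\mathrm{disc}(B)$. Here $\mathrm{disc}(A)$ is the Gram determinant of $(P_1\otimes M)^G$ and $\mathrm{disc}(B)$ that of $(P_2\otimes M)^G$. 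Inserting the computation above, the ratio $\mathrm{disc}(A)/\mathrm{disc}(B)$ is $\rc{\Theta}{M}$ times $\card{G}^{\sum_H n_H r_H}$. That last power is $1$, because $\sum_H n_H\rk{M^H}=\sum_H n_H\dim(\Q[G/H]\otimes M_\Q)^G=0$ by the Brauer relation.

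The main obstacle I expect is the bookkeeping in this last step. I need to check that $\dual\phi\otimes1$ really restricts to the adjoint on invariants, and not to a $\card{G}$-multiple or something similar, given the chosen identifications $(\Z[G/H]\otimes M)^G\cong M^H$. I would verify it directly on the orbit-sum bases. Any stray scalar factors then cancel by the same rank identity $\sum n_H r_H=0$.
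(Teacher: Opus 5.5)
Your proof is correct, but it takes a different route from the paper. The paper does no computation: it quotes Bartel--de Smit's Lemma 3.1, which is phrased in terms of $\Hom(P_i,M)$, and translates it through $\Hom(P,M)\cong\dual{P}\otimes M$. You instead prove the identity directly, in two pieces.

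\emph{Torsion.} Splitting off torsion with \eqref{eq:qtorssplit}, the $q$-index of a map between finite groups is the ratio of target order to source order. Since the two maps run in opposite directions, you get exactly the factor $\prod_H\card{\tors{M}^H}^{-2n_H}$ of Definition \ref{def:rc}.

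\emph{Lattice part.} After identifying $\dual{P_i}$ with $P_i$, the map $(\dual{\phi}\otimes\mathrm{id}_M)^G$ is exactly the adjoint of $(\phi\otimes\mathrm{id}_M)^G$ for the restricted tensor pairing. Adjointness already holds on $P_i\otimes M$ before taking invariants, so the stray-scalar issue you worried about does not arise. Hence $\qindex{f}/\qindex{f^t}=\mathrm{disc}(A)/\mathrm{disc}(B)$. The powers of $\card{G}$ coming from $[G:H]=\card{G}/\card{H}$ cancel because $\sum_H n_H\rk{M^H}=0$.

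Your route gives a self-contained argument. It also explicitly checks that the torsion normalisation in Definition \ref{def:rc} is precisely what the $q$-index formula produces, which the paper's one-line rephrasing leaves implicit. The paper's route buys brevity.

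One thing to tidy: do not literally ``let $M$ be torsion-free''. The torsion-free quotient of $(P_1\otimes M)^G$ is $\bigoplus_{n_H>0}(\mt{M^H})^{n_H}$. Here $\mt{M^H}=M^H/\tors{M}^H$ may have index $>1$ in $(\mt{M})^H$, since the cokernel embeds in $\HH{1}{H}{\tors{M}}$. So replacing $M$ by $\mt{M}$ would compute determinants on the wrong lattices. Instead, apply your discriminant argument directly to $A=\mt{(P_1\otimes M)^G}$ and $B=\mt{(P_2\otimes M)^G}$, with the pairing induced from $M$, which is non-degenerate there. It goes through verbatim and yields the determinants on $\mt{M^H}$ that the definition requires.
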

\begin{proof}
This is a rephrasing of \cite[Lemma 3.1]{BarteldeSmit2013}, using the canonical isomorphism 
\[
\Hom(P, M) \cong  \dual{P} \otimes_\Z M
\] 
(see \cite[II \S4, Corollaire de la Proposition 2]{BourbakiA1-3}). 
\end{proof}

We now prove a first relation between the regulator constant of a module and that of its dual. We first need a preliminary lemma.

\begin{lemma}\label{lemma:torscoinv-1}
Let $M$ be a finitely generated $\Z[G]$-modules with no $\Z$-torsion. Then, for any subgroup $H\leq G$, 
\[
\tors{(M_H)} = \HHT{-1}{H}{M}.
\]
\end{lemma}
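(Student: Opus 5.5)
We use the standard description of $\HHT{-1}{H}{M}$ as the kernel of the norm map on coinvariants. Let $N_H=\sum_{h\in H}h$. Then $N_H$ induces a homomorphism $\bar N_H: M_H\to M^H$, and by definition
\[
\HHT{-1}{H}{M}=\Ker\bigl(\bar N_H: M_H\to M^H\bigr).
\]
The plan is to show that this kernel is exactly the torsion subgroup of $M_H$. This amounts to two inclusions.

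For the inclusion $\HHT{-1}{H}{M}\subseteq\tors{(M_H)}$, recall that Tate cohomology groups of a finite group $H$ are annihilated by $\card{H}$. Hence every element of $\Ker \bar N_H$ has finite order, so it lies in $\tors{(M_H)}$. One can also see this directly: if $x\in M$ satisfies $N_Hx=0$, then
\[
\card{H}x=\sum_{h\in H}(x-hx)+N_Hx=\sum_{h\in H}(1-h)x\in I_HM .
\]

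For the reverse inclusion $\tors{(M_H)}\subseteq\HHT{-1}{H}{M}$, we use the hypothesis that $M$ is $\Z$-torsion free. Then $M^H\subseteq M$ is also $\Z$-torsion free, so every homomorphism from a torsion group into $M^H$ is zero. In particular $\bar N_H$ kills $\tors{(M_H)}$, which is what we need. Explicitly, if $x\in M$ and $nx\in I_HM$ for some $n\neq 0$, then $nN_Hx=N_H(nx)=0$, since $N_H$ annihilates $I_HM$. As $M$ has no $\Z$-torsion, this forces $N_Hx=0$.

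Combining the two inclusions gives the equality $\tors{(M_H)}=\HHT{-1}{H}{M}$ as subgroups of $M_H$. There is no real obstacle here. The only point requiring care is to use the identification of $\HHT{-1}{H}{M}$ with $\Ker\bar N_H$ on $M_H$, rather than some other model of $\HHT{-1}{}{}$, so that both sides are literally subgroups of $M_H$ and the statement is an equality and not merely an isomorphism.
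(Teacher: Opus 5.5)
Your proposal is correct and matches the paper's proof. Both identify $\HHT{-1}{H}{M}$ with the kernel of the norm map $M_H\to M^H$. Torsion-freeness of $M^H$ then gives $\tors{(M_H)}\subseteq\HHT{-1}{H}{M}$, and the fact that Tate cohomology is torsion gives the reverse inclusion.
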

\begin{proof}
By definition of $\HHT{-1}{H}{M}$, there is an exact sequence
\[
0 \to \HHT{-1}{H}{M} \to M_H \stackrel{N_H}{\longrightarrow} M^H
\]
where $N_H=\sum_{h\in H}h\in\Z[H]$ is the norm. Since $M^H$ has no $\Z$-torsion, $\tors{(M_H)} \subseteq \HHT{-1}{H}{M}$ and equality holds since $\HHT{-1}{H}{M}$ is torsion.  
\end{proof} 

To simplify notation, for $i\in \mathbb{Z}$, we set 
\[
\hht{i}{\Theta}{M}=\prod_{H\leq G}\hht{i}{H}{M}^{n_H}.
\]
If $f:M\to N$ is a homomorphism of finitely generated $\Z[G]$-modules, we set
\[
\kdt{i}{\Theta}{f} = \prod_{H\leq G}\card{\Ker\left(\HHT{i}{H}{f}\right)}^{n_H}
\]
where 
\[
\HHT{i}{H}{f}:\HHT{i}{H}{M}\to\HH{i}{H}{N}
\] 
is the map induced by $f$ in cohomology.

\begin{proposition}\label{prop:rcdual1st}
Let $M$ be a finitely generated $\Z[G]$-modules with no $\Z$-torsion. Then we have
\[
\rc{\Theta}{M}\rc{\Theta}{\dual{M}} = \hht{0}{\Theta}{M}^{-2}.
\]
\end{proposition}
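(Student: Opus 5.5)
The plan is to compute both regulator constants with Proposition~\ref{prop:rcalt}, using the same $\phi:P_1\to P_2$ for $M$ and for $\dual{M}$, and to compare the four $q$-indices that appear. For a finitely generated $\Z[G]$-module $X$ set
\[
a_X=\qindex{(\phi\otimes\mathrm{id}_X)^G},\qquad b_X=\qindex{(\dual{\phi}\otimes\mathrm{id}_X)^G},
\]
so that $\rc{\Theta}{M}\rc{\Theta}{\dual{M}}=a_Ma_{\dual{M}}/(b_Mb_{\dual{M}})$. The goal is to express each $b$ in terms of an $a$ and a Tate cohomology factor.

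\textbf{Step 1: identify $b_{\dual{M}}$ with a dual map.} Since $M$ and the $P_i$ are $\Z$-free, there are canonical $G$-isomorphisms $\dual{P_i}\otimes\dual{M}\cong\dual{(P_i\otimes M)}$, and $\dual{N}^G\cong\dual{(N_G)}$ for any $N$. Hence $(\dual{\phi}\otimes\mathrm{id}_{\dual{M}})^G$ is the $\Z$-dual of the map on coinvariants
\[
f_M:=(\phi\otimes\mathrm{id}_M)_G:(P_1\otimes M)_G\to(P_2\otimes M)_G .
\]
Taking $\Z$-duals only sees the torsion-free quotients, so $b_{\dual{M}}=\qindex{\mt{f_M}}$; in the notation of \eqref{eq:qdualsplit} this is $\qindex{\dual{f_M}}$ for the torsion-free part. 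Symmetrically, using $\ddual{M}=M$, we get $b_M=\qindex{\mt{f_{\dual{M}}}}$.

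\textbf{Step 2: compare coinvariants with invariants via the norm.} For $N=P_i\otimes M$, the norm $N_G$ induces an injection $\mt{N_G}\to N^G$. It is injective by Lemma~\ref{lemma:torscoinv-1}, since its kernel is $\HHT{-1}{G}{N}=\tors{(N_G)}$, and its cokernel is $\HHT{0}{G}{N}$. The square formed by $\mt{f_M}$, $(\phi\otimes\mathrm{id}_M)^G$ and the two norm maps commutes. By multiplicativity of the $q$-index,
\[
a_M\,\hht{0}{G}{P_1\otimes M}=\qindex{\mt{f_M}}\,\hht{0}{G}{P_2\otimes M}.
\]
By Shapiro's lemma, $P_i\otimes M\cong\bigoplus\mathrm{Ind}_H^G(M)^{\pm n_H}$, so $\hht{0}{G}{P_1\otimes M}/\hht{0}{G}{P_2\otimes M}=\hht{0}{\Theta}{M}$. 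Therefore
\[
b_{\dual{M}}=a_M\,\hht{0}{\Theta}{M},\qquad b_M=a_{\dual{M}}\,\hht{0}{\Theta}{\dual{M}} .
\]

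\textbf{Step 3: conclude using duality.} Substituting gives
\[
\rc{\Theta}{M}\rc{\Theta}{\dual{M}}=\bigl(\hht{0}{\Theta}{M}\,\hht{0}{\Theta}{\dual{M}}\bigr)^{-1}.
\]
It then suffices to show $\hht{0}{H}{\dual{M}}=\hht{0}{H}{M}$ for each $H$. This follows from the standard duality $\HHT{i}{H}{\dual{M}}\cong\pdual{\HHT{-i}{H}{M}}$ for $\Z$-free $M$, applied with $i=0$.

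\textbf{Main obstacle.} I expect the delicate part to be the bookkeeping in Steps 1–2. One must check that the identification $\dual{(N_G)}\cong\dual{N}^G$ is natural in $\phi$, so that the dual really is the map $\dual{\phi}\otimes\mathrm{id}$. One must also check that the commutative square is compatible with the Shapiro decompositions. Getting the orientation of the quotient right, i.e.\ $P_1$ over $P_2$ rather than the reverse, is what fixes the sign of the exponent.
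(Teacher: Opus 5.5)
Your proof is correct and follows essentially the paper's route. Both use the Bartel--de Smit formula, identify $(\dual{\phi}\otimes\mathrm{id})^G$ with the $\Z$-dual of the coinvariant map (which only sees torsion-free quotients), and compare coinvariants with invariants via the norm, Lemma~\ref{lemma:torscoinv-1} and Shapiro's lemma. Two differences are minor. First, you map the torsion-free quotient of the coinvariants directly into the invariants, which removes the paper's separate $\hht{-1}{\Theta}{\cdot}$ factors (they cancel there anyway). Second, you state explicitly the duality $\hht{0}{H}{\dual{M}}=\hht{0}{H}{M}$, which the paper's final equality uses without comment.
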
 
\begin{proof}
Observe that
\begin{equation}\label{eq:dualitycore}
\begin{aligned}
\qindex{\left(\phi\otimes \mathrm{id}_{\dual{M}}\right)_G}& = \qindex{\tors{\left(\left(\phi\otimes \mathrm{id}_{\dual{M}}\right)_G\right)}}\qindex{\mt{\left(\phi\otimes \mathrm{id}_{\dual{M}}\right)_G}}  \quad \text{(by \eqref{eq:qtorssplit})}\\
& = \qindex{\tors{\left(\left(\phi\otimes \mathrm{id}_{\dual{M}}\right)_G\right)}}\qindex{\dual{\left(\mt{\left(\phi\otimes \mathrm{id}_{\dual{M}}\right)_G}\right)}} \quad \text{(by \eqref{eq:qdualsplit})}\\
& = \qindex{\tors{\left(\left(\phi\otimes \mathrm{id}_{\dual{M}}\right)_G\right)}}\qindex{\dual{\left(\left(\phi\otimes \mathrm{id}_{\dual{M}}\right)_G\right)}}\\
& = \qindex{\tors{\left(\left(\phi\otimes \mathrm{id}_{\dual{M}}\right)_G\right)}}\qindex{\dual{\left(\left(\phi\otimes \mathrm{id}_{\dual{M}}\right)}\right)^G}\\
& = \qindex{\tors{\left(\left(\phi\otimes \mathrm{id}_{\dual{M}}\right)_G\right)}}\qindex{\left(\dual{\phi}\otimes \mathrm{id}_M\right)^G}.
\end{aligned}
\end{equation}  
The last equality follows from the existence of canonical isomorphisms $\dual{(P \otimes M)} \cong \dual{P} \otimes \dual{M}$, for a $\Z[G]$-module $P$ with no $\Z$-torsion (see \cite[II \S4, Corollaire 1 de la Proposition 4]{BourbakiA1-3}) and
$\ddual{M} \cong M$ (see \cite[II \S2, Proposition 13]{BourbakiA1-3}).

Moreover we have
\begin{equation}\label{eq:torscoinvh-1}
\qindex{\tors{\left(\left(\phi\otimes \mathrm{id}_{\dual{M}}\right)_G\right)}} = \hht{-1}{\Theta}{\dual{M}}. 
\end{equation}
In fact, since both $\tors{((P_1\otimes_\Z M)_G)}$ and $\tors{((P_2\otimes_\Z M)_G)}$ are finite, we have
\[
\qindex{\tors{\left(\left(\phi\otimes \mathrm{id}_M\right)_G\right)}} =\frac{ \card{\tors{((P_2\otimes_\Z M)_G)}}}{\card{\tors{((P_1\otimes_\Z M)_G)}}} = \prod_{H\leq G} \card{\tors{(M_H)}}^{-n_H}
\]
by Shapiro's lemma. Then \eqref{eq:torscoinvh-1} follows from Lemma \ref{lemma:torscoinv-1}. Plugging \eqref{eq:torscoinvh-1} in \eqref{eq:dualitycore}, we deduce that
\begin{equation}\label{eq:invcoinvflip}
\qindex{\left(\dual{\phi}\otimes \mathrm{id}_M\right)^G} = \qindex{\left(\phi\otimes \mathrm{id}_{\dual{M}}\right)_G}\hht{-1}{\Theta}{\dual{M}}. 
\end{equation}
In a similar way we obtain
\begin{equation}\label{eq:invcoinvflip2}
\qindex{\left(\dual{\phi}\otimes \mathrm{id}_{\dual{M}}\right)^G} = \qindex{\left(\phi\otimes \mathrm{id}_{M}\right)_G}\hht{-1}{\Theta}{M}. 
\end{equation}
 
Observe also that  
\begin{equation}\label{eq:qvsherbrandequivariant}
\frac{\qindex{\left(\phi\otimes \mathrm{id}_M\right)^G}}{\qindex{\left(\phi\otimes \mathrm{id}_{M}\right)_G}} = \frac{\hht{-1}{\Theta}{M}}{\hht{0}{\Theta}{M}}.
\end{equation}
To see this, consider the commutative diagram with exact rows
\[
\begin{tikzcd}
0 \arrow[r]& \displaystyle{\bigoplus_{\substack{H\leq G, \\n_H \geq 0}}} \HHT{-1}{H}{M}^{n_H} \arrow[r]\arrow[d]& \left(P_1\otimes_\Z M\right)_G \arrow[r,"N_1"]\arrow[d,"(\phi\otimes\mathrm{id}_M)_G"]& \left(P_1\otimes_\Z M\right)^G\arrow[r]\arrow[d,"(\phi\otimes\mathrm{id}_M)^G"]& \displaystyle{\bigoplus_{\substack{H\leq G, \\n_H \geq 0}}} \HHT{0}{H}{M}^{n_H}\arrow[r]& 0\\
0 \arrow[r]& \displaystyle{\bigoplus_{\substack{H\leq G, \\n_H \leq 0}}} \HHT{-1}{H}{M}^{n_H} \arrow[r]& \left(P_2\otimes_\Z M\right)_G \arrow[r,"N_2"]& \left(P_2\otimes_\Z M\right)^G\arrow[r]& \displaystyle{\bigoplus_{\substack{H\leq G, \\n_H \leq 0}}} \HHT{0}{H}{M}^{n_H}\arrow[r]& 0
\end{tikzcd}
\]
Here $N_1=\oplus_{H\leq G, n_H\geq 0} N_H^{n_H}$ is the homomorphism induced by the norms $N_H$ on 
\[
\left(P_1\otimes _\Z M\right)_G = \left(\displaystyle{\bigoplus_{\substack{H\leq G, \\n_H \geq 0}}} \Z[G/H]^{n_H} \otimes M\right)_G = \displaystyle{\bigoplus_{\substack{H\leq G, \\n_H \geq 0}}} (M_H)^{n_H}
\]
and similarly for $N_2$. Splitting the above diagram in two diagrams, each having 3-term short exact sequences as rows, and applying the snake lemma to each of them, we deduce \eqref{eq:qvsherbrandequivariant}.  
Therefore
\begin{align*}
\rc{\Theta}{M}\rc{\Theta}{\dual{M}} &= \frac{\qindex{\left(\phi\otimes \mathrm{id}_M\right)^G}}{\qindex{\left(\dual{\phi}\otimes \mathrm{id}_M\right)^G}}
\frac{\qindex{\left(\phi\otimes \mathrm{id}_{\dual{M}}\right)^G}}{\qindex{\left(\dual{\phi}\otimes \mathrm{id}_{\dual{M}}\right)^G}}\quad \text{by Prop. \ref{prop:rcalt}}\\
&=\frac{\qindex{\left(\phi\otimes \mathrm{id}_M\right)^G}}{\qindex{\left(\phi\otimes \mathrm{id}_{\dual{M}}\right)_G}}\hht{-1}{\Theta}{\dual{M}}
\frac{\qindex{\left(\phi\otimes \mathrm{id}_{\dual{M}}\right)^G}}{\qindex{\left(\phi\otimes \mathrm{id}_{M}\right)_G}}\hht{-1}{\Theta}{M} \quad \text{by  \eqref{eq:invcoinvflip} and \eqref{eq:invcoinvflip2}}\\
&=\frac{\qindex{\left(\phi\otimes \mathrm{id}_M\right)^G}}{\qindex{\left(\phi\otimes \mathrm{id}_{M}\right)_G}}\hht{-1}{\Theta}{M}
\frac{\qindex{\left(\phi\otimes \mathrm{id}_{\dual{M}}\right)^G}}{\qindex{\left(\phi\otimes \mathrm{id}_{\dual{M}}\right)_G}}\hht{-1}{\Theta}{\dual{M}}\\
&=\hht{0}{\Theta}{M}^{-2} \quad \text{by  \eqref{eq:qvsherbrandequivariant}.}\qedhere
\end{align*}
\end{proof}

We now establish a second relation between $\rc{\Theta}{M}$ and $\rc{\Theta}{\dual{M}}$. 
Let $\repring{\Q}{G}$ denote the rational representation ring of $G$ and let $\burnside{\Q}{G}\subseteq \repring{\Q}{G}$ denote the subring generated by rational permutation representations. By a version of Artin’s induction
theorem (see \cite[Theorem 30]{Serrelrfg}), $\repring{\Q}{G}/\burnside{\Q}{G}$ has finite exponent, which we denote by $\bexp{G}$. In particular, for a finitely generated $\Z[G]$-module $M$ with no $\Z$-torsion there always exist permutation $\Z[G]$-modules $A_1, A_2$ and a positive integer $m\mid \bexp{G}$ and an injective $\Z[G]$-homomorphism $f: M^m \oplus A_1 \to A_2$ with finite cokernel. The exponent $\bexp{G}$ has been computed for several classes of groups: for instance, it is known that $\bexp{G}=1$ for $p$-groups, symmetric groups and some metacyclic groups, including dihedral groups, but in general $\bexp{G}$ is nontrivial (see \cite{Rasmussen1977} and references therein).


\begin{proposition}\label{prop:rcdual2nd}
Let $M$ be a finitely generated $\Z[G]$-module with no $\Z$-torsion. Let $m\mid \bexp{G}$ be a positive integer such that there exist permutation $\Z[G]$-modules $A_1, A_2$ and an injective $\Z[G]$-homomorphism $f: M^m \oplus A_1 \to A_2$ with finite cokernel (such $m$ always exists by Artin's induction theorem). Then we have
\[
\rc{\Theta}{\dual{M}}^m =  \rc{\Theta}{V}\rc{\Theta}{M}^m\hh{1}{\Theta}{M}^{2m}
\]
for some finite $\Z[G]$-module $V$ such that $V\cong \pdual{V}$ as $\Z[G]$-modules.
\end{proposition}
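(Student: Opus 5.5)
The plan is to reduce to one symmetric $\Z[G]$-map between a module and its dual, and then compare regulator constants subgroup by subgroup through the long exact cohomology sequence.

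\textbf{Construction of $V$.} Put $N = M^m\oplus A_1$. Every permutation module $\Z[X]$ is canonically isomorphic to its $\Z$-dual, so I identify $A_2\cong \dual{A_2}$ and set
\[
g=\dual{f}\circ f : N\to \dual{N}.
\]
Since $f$ and $\dual{f}$ both have finite cokernel and $f$ is injective, $g$ is injective with finite cokernel. Let $V=\Coker g$, which gives a short exact sequence $0\to N\xrightarrow{g}\dual{N}\to V\to 0$. The map $g$ is symmetric: under $\ddual{N}=N$ we have $\dual{g}=g$.

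I would show $V\cong\pdual{V}$ as follows. Apply $\Hom(-,\Z)$ to the sequence. Since $\Hom(V,\Z)=0$, $\Ext{1}{\Z}{\dual{N}}{\Z}=0$ and $\Ext{1}{\Z}{V}{\Z}\cong\pdual{V}$ $G$-equivariantly, we get $0\to N\xrightarrow{\dual{g}}\dual{N}\to\pdual{V}\to 0$. Because $\dual{g}=g$, this yields $\pdual{V}\cong\Coker g= V$.

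\textbf{Comparing $\rc{\Theta}{N}$ and $\rc{\Theta}{\dual{N}}$.} Choose a $G$-invariant pairing on $\dual{N}$ that is nondegenerate, and pull it back along $g$ to a pairing on $N$. For each $H$, $g$ identifies $N^H$ with a finite-index sublattice of $(\dual{N})^H$. The two determinants in Definition \ref{def:rc} therefore differ by the square of the index $[(\dual{N})^H:g(N^H)]$.

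The long exact sequence
\[
0\to N^H\to(\dual{N})^H\to V^H\to \HH{1}{H}{N}\xrightarrow{H^1(g)}\HH{1}{H}{\dual{N}}
\]
gives this index as $\card{V^H}/\card{\Ker \HH{1}{H}{g}}$. Since $V$ is finite, Definition \ref{def:rc} gives $\rc{\Theta}{V}=\prod_H\card{V^H}^{-2n_H}$. Combining these, I expect
\[
\rc{\Theta}{\dual{N}}=\rc{\Theta}{N}\,\rc{\Theta}{V}\,\prod_H\card{\Ker H^1(H,g)}^{2n_H}.
\]
Fixing the direction of every exponent here is the routine step I would double-check.

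\textbf{Evaluating the kernel term.} The key observation is that $g$ factors through $A_2$. By Shapiro's lemma, $\HH{1}{H}{A_2}$ is a sum of groups $\HH{1}{K}{\Z}=\Hom(K,\Z)=0$, so $\HH{1}{H}{g}=0$. Hence its kernel is all of $\HH{1}{H}{N}=\HH{1}{H}{M}^m\oplus \HH{1}{H}{A_1}=\HH{1}{H}{M}^m$, using the same vanishing for the permutation module $A_1$. The product term is therefore $\hh{1}{\Theta}{M}^{2m}$.

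\textbf{Conclusion and main obstacle.} Regulator constants are multiplicative on direct sums, and $\dual{N}=(\dual{M})^m\oplus \dual{A_1}$ with $\dual{A_1}\cong A_1$. The factors $\rc{\Theta}{A_1}$ on both sides then cancel, leaving the claimed identity. The main obstacle is the bookkeeping in the middle step. One has to check that the normalisations in Definition \ref{def:rc} (the factor $1/\card{H}$ and the torsion term) match on $N$ and $\dual{N}$, and that the exponents of $\card{V^H}$ and of the kernel come out with the stated signs.
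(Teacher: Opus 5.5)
Your proposal is correct and follows essentially the same route as the paper. The paper uses the same map $\dual{f}\circ\varphi_{A_2}\circ f$ and the same vanishing of $\HH{1}{H}{A_2}$ to kill $\HH{1}{H}{g}$. It also obtains $V\cong\pdual{V}$ the same way, from $\Ext{1}{\Z}{V}{\Z}\cong\pdual{V}$ and $\dual{g}=g$. You assert that symmetry without proof; it rests on the symmetry of the standard pairing on $A_2$, which the paper checks explicitly.

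The only real difference is the identity $\rc{\Theta}{\dual{N}}=\rc{\Theta}{N}\rc{\Theta}{V}\prod_H\card{\Ker \HH{1}{H}{g}}^{2n_H}$. You derive it directly by the lattice-index computation, and your exponents and signs are right. The paper instead cites \cite[Proposition 2.12]{Caputo2025} for it.
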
 
\begin{proof}
Recall that a permutation module $\Z[S]$ on a $G$-set $S$ has a perfect symmetric $\Z[G]$-bilinear pairing 
\[
\langle\,,\rangle:\Z[S]\times \Z[S] \to \Z
\] 
defined by
\[
\langle\sum_{s\in S}x_ss,\sum_{s\in S}y_ss\rangle=\sum_{s\in S}\sum_{t\in S}x_sy_t\delta_{s,t}
\]
where $\delta_{s,t} = 1$ if $s=t$ and $0$ otherwise. In particular we have a $\Z[G]$-isomorphism
\begin{equation}\label{eq:curry}
\varphi_{\Z[S]}:\Z[S] \to \dual{\Z[S]},\quad \sum_{s\in S}a_s s \mapsto\langle \sum_{s\in S}a_s s, \cdot\rangle.
\end{equation}
Now $F=\dual{f}\circ\varphi_{A_2}\circ f:M^m\oplus A_1 \to \dual{(M^m\oplus A_1)}$ is injective and has finite cokernel 
\[
V=\Coker(F).
\] 
By \cite[Proposition 2.12]{Caputo2025}, we have
\[
\rc{\Theta}{\dual{(M^m\oplus A_1)}} = \rc{\Theta}{M^m\oplus A_1} \rc{\Theta}{V} \kdt{1}{\Theta}{F}^2.
\]
Observe now that, for $H\leq G$, $\HH{1}{H}{A_i}=0$ for $i=1,2$: permutation modules are direct sums of modules of the form $\Z[G/U]$ for $U\leq G$ and $\Z[G/U]$ is $\Z[H]$-isomorphic to a direct sum of modules of the form $\Z[H/H']$ for $H'\leq H$. Since $\HH{1}{H}{\Z[H/H']}=\HH{1}{H'}{\Z}=0$ (by Shapiro's lemma), we obtain $\HH{1}{H}{A_i}=0$.
Now, for any $H\leq G$, $\HH{1}{H}{F}$ factors through $\HH{1}{H}{f}$ which is $0$ since $\HH{1}{H}{A_2}=0$. In particular
\[
\kdt{1}{\Theta}{F} = \hh{1}{\Theta}{M^m\oplus A_1} = \hh{1}{\Theta}{M^m},
\] 
since $\HH{1}{H}{A_1}=0$. Since regulator constants are multiplicative with respect of direct sums and $A_1$ is self-dual (see \eqref{eq:curry}), we get 
\[
\rc{\Theta}{\dual{M}}^m = \rc{\Theta}{M}^m \rc{\Theta}{V}\hh{1}{\Theta}{M}^{2m},
\]  
showing the first assertion.

To prove that $V\cong \pdual{V}$ as $\Z[G]$-modules we argue as follows. For a finitely generated $\Z[G]$-module $N$ with no $\Z$-torsion, let 
$\Phi_N:N\to \ddual{N}$ denote the canonical $\Z[G]$-isomorphism defined by
\[
\Phi_N(n): \psi \mapsto \psi(n), \quad \psi \in \dual{N}.
\]
Set $M_1=M^m \oplus A_1$: we claim that the following diagram is commutative
\begin{equation}\label{eq:symmetrydiag}
\begin{tikzcd}
M_1 \arrow[r, "f"]\arrow[d, "\Phi_A"]& A_2 \arrow[r,"\varphi_{A_2}"]\arrow[d,"\Phi_{A_2}"]& \dual{A_2} \arrow[r,"\dual{f}"] \dar[equal]& \dual{M_1}\dar[equal]\\
\ddual{M_1} \arrow[r,"\ddual{f}"]& \ddual{A_2} \arrow[r,"\dual{\varphi_{A_2}}"]& \dual{A_2} \arrow[r,"\dual{f}"]& \dual{M_1}
\end{tikzcd}
\end{equation}
The rightmost square is clearly commutative and the same holds for the leftmost one, since $\Phi$ is functorial. As for the central square, observe that, for $x, y\in A_2$,
\[
\dual{\varphi_{A_2}}(\Phi_{A_2}(x))(y) = \Phi_{A_2}(x)(\varphi_{A_2}(y)) = \varphi_{A_2}(y)(x) = \langle y,x\rangle = \langle x,y\rangle = \varphi_{A_2}(x)(y).
\]
This means that the central square of \eqref{eq:symmetrydiag} is commutative and the claim is proved. Now consider the diagram
\begin{equation}
\begin{tikzcd}
0\arrow[r]&M_1 \arrow[r, "F"]\arrow[d, "\Phi_{M_1}"]& \dual{M_1} \arrow[r]\dar[equal]& V \arrow[r] \arrow[d]& 0\\
0\arrow[r]&\ddual{M_1} \arrow[r,"\dual{F}"]& \dual{M_1} \arrow[r]& \Ext{1}{\Z}{V}{\Z} \arrow[r]& 0
\end{tikzcd}
\end{equation}
The top row is exact by definition of $V$. The bottom row is also exact since $\dual{M_1}$ has no $\Z$-torsion (so $\Ext{1}{\Z}{\dual{M_1}}{\Z}=0$) and $V$ is torsion (so $\Ker(\dual{F})=\dual{V}=0$). Moreover we just proved that the leftmost square is commutative, so that the vertical map $V\to\Ext{1}{\Z}{V}{\Z}$ can be defined as the one making the rightmost square commutative. Finally, since the leftmost and central vertical arrows are bijective, so is the rightmost one, inducing an isomorphism $V\cong \pdual{V}$ since
\[
\Ext{1}{\Z}{V}{\Z} \cong \pdual{V}
\] 
as $\Z[G]$-modules (apply the left-exact functor $\Hom_\Z(V,\cdot)$ to $0\to \Z\to\Q\to\Q/\Z\to 0$ and use $\Hom_\Z(V,\Q) = \Ext{1}{\Z}{V}{\Q} = 0$).  
\end{proof}

%

Combining Propositions \ref{prop:rcdual1st} and \ref{prop:rcdual2nd} we get the following result.

\begin{corollary}\label{cor:rcformulatf} 
Let $M$ be a finitely generated $\Z[G]$-module with no $\Z$-torsion and let $m$ be as in Propositon \ref{prop:rcdual2nd}. Then we have
\[
\rc{\Theta}{M}^{2m} = \rc{\Theta}{V}^{-1} \left(\hht{0}{\Theta}{M}\hh{1}{\Theta}{M}\right)^{-2m}
\]
for some finite $\Z[G]$-module $V$ such that $V\cong \pdual{V}$ as $\Z[G]$-modules.
\end{corollary}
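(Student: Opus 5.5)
The plan is to eliminate $\rc{\Theta}{\dual{M}}$ between the two relations already established. Raise the identity of Proposition \ref{prop:rcdual1st} to the $m$-th power, with $m$ and $V$ exactly as in Proposition \ref{prop:rcdual2nd}:
\[
\rc{\Theta}{M}^m\rc{\Theta}{\dual{M}}^m = \hht{0}{\Theta}{M}^{-2m}.
\]
Then substitute the expression for $\rc{\Theta}{\dual{M}}^m$ from Proposition \ref{prop:rcdual2nd}. This gives
\[
\rc{\Theta}{M}^{2m}\rc{\Theta}{V}\hh{1}{\Theta}{M}^{2m} = \hht{0}{\Theta}{M}^{-2m}.
\]
Solving for $\rc{\Theta}{M}^{2m}$ yields exactly the asserted formula. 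The module $V$ is the same one produced by Proposition \ref{prop:rcdual2nd}, so the self-duality $V\cong\pdual{V}$ is inherited directly.

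Only a few small points need checking, and there is no real obstacle.

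\begin{itemize}
\item Every quantity involved lies in $\Q^\times$, so the division is legitimate. Regulator constants are nonzero rationals by Definition \ref{def:rc}, which also covers the finite module $V$, whose torsion-free part is zero. The $\hht{i}{\Theta}{M}$ are products of integer powers of positive integers.
\item The notation $\hh{1}{\Theta}{M}$ agrees with $\hht{1}{\Theta}{M}$, since $\HH{1}{H}{M}=\HHT{1}{H}{M}$ in positive degrees.
\item Both propositions are applied to the same torsion-free $M$, and the same integer $m$ is used in both.
\end{itemize}
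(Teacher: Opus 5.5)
Your proposal is correct and is exactly the paper's argument: the paper obtains the corollary by combining Propositions \ref{prop:rcdual1st} and \ref{prop:rcdual2nd}, eliminating $\rc{\Theta}{\dual{M}}$ after raising the first identity to the $m$-th power. The self-dual module $V$ is taken directly from Proposition \ref{prop:rcdual2nd}, just as you do.
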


We now turn to the case of a general finitely generated $\Z[G]$-module, not necessarily $\Z$-torsion free. 

\begin{theorem}\label{th:rcformula}
Let $M$ be a finitely generated $\Z[G]$-module and let $m$ be as in Proposition \ref{prop:rcdual2nd}. Then we have
\[
\rc{\Theta}{M}^{2m} = \rc{\Theta}{\tors{M}}^{2m} \rc{\Theta}{V}^{-1} \left(\frac{\hht{0}{\Theta}{\tors{M}}\hh{1}{\Theta}{\tors{M}}}{\hht{0}{\Theta}{M}\hh{1}{\Theta}{M}}\right)^{2m}\kdt{0}{\Theta}{\iota}^{-2m}\kdt{2}{\Theta}{\iota}^{-2m}
\]
for some finite $\Z[G]$-module $V$ such that $V\cong \pdual{V}$ as $\Z[G]$-modules.
\end{theorem}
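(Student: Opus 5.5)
The plan is to reduce to Corollary \ref{cor:rcformulatf} through the short exact sequence
\[
0\to \tors{M}\stackrel{\iota}{\longrightarrow} M \stackrel{\pi}{\longrightarrow} \mt{M}\to 0,
\]
where $\iota$ is the inclusion of the torsion subgroup. The map $\iota$ is the one whose cohomological kernels appear as $\kdt{i}{\Theta}{\iota}$ in the statement. The quotient $\mt{M}$ is torsion free, and an $m$ as in Proposition \ref{prop:rcdual2nd} also works for $\mt{M}$; I would take $m$ to be attached to $\mt{M}$. Corollary \ref{cor:rcformulatf} then gives
\[
\rc{\Theta}{\mt{M}}^{2m}=\rc{\Theta}{V}^{-1}\left(\hht{0}{\Theta}{\mt{M}}\hh{1}{\Theta}{\mt{M}}\right)^{-2m}
\]
with $V\cong\pdual{V}$.

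The first step is to compare $\rc{\Theta}{M}$ with $\rc{\Theta}{\tors{M}}\rc{\Theta}{\mt{M}}$. I would use the multiplicativity defect formula \cite[Proposition 2.12]{Caputo2025} for the sequence above. It should give $\rc{\Theta}{M}=\rc{\Theta}{\tors{M}}\rc{\Theta}{\mt{M}}$ up to a factor of the form $\kdt{1}{\Theta}{\iota}^{\pm 2}$, or some comparable kernel term on $H^1$. This is the same shape of formula used in the proof of Proposition \ref{prop:rcdual2nd}. If the cited formula is not directly in this form, I would check it by hand from Definition \ref{def:rc}. The determinant on $\mt{M^H}$ compared with that on $(\mt{M})^H$ changes by the square of the index $[(\mt{M})^H:\mt{M^H}]$. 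By the long exact sequence this index is $\card{\Ker(\HH{1}{H}{\tors{M}}\to\HH{1}{H}{M})}$. The torsion factor $\card{\tors{M}^H}^{-2}$ is exactly the $\tors{M}$ regulator-constant contribution.

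The second step is to express the cohomology of $\mt{M}$ in terms of that of $M$ and $\tors{M}$. For each $H$, the long exact Tate sequence of the short exact sequence is split at the kernels of $\HHT{i}{H}{\iota}$. This gives
\[
\hht{0}{H}{\mt{M}}=\frac{\hht{0}{H}{M}\,\card{\Ker \HHT{1}{H}{\iota}}}{\hht{0}{H}{\tors{M}}}\cdot\card{\Ker\HHT{0}{H}{\iota}}
\]
together with the analogous identity for $H^1$, which involves $\Ker\HHT{1}{H}{\iota}$ and $\Ker\HHT{2}{H}{\iota}$. All these groups are finite. Indeed $\HHT{i}{H}{\cdot}$ is finite for finitely generated modules, and on $H^1$ Tate and ordinary cohomology coincide. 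Taking the product over $H$ with exponents $n_H$ yields
\[
\hht{0}{\Theta}{\mt{M}}\hh{1}{\Theta}{\mt{M}}=\frac{\hht{0}{\Theta}{M}\hh{1}{\Theta}{M}}{\hht{0}{\Theta}{\tors{M}}\hh{1}{\Theta}{\tors{M}}}\,\kdt{0}{\Theta}{\iota}\,\kdt{1}{\Theta}{\iota}^{2}\,\kdt{2}{\Theta}{\iota}.
\]

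The third step substitutes both identities into the corollary and raises the first step to the power $2m$. The $\kdt{1}{\Theta}{\iota}$ factors should cancel between the first and second steps, leaving exactly $\kdt{0}{\Theta}{\iota}^{-2m}\kdt{2}{\Theta}{\iota}^{-2m}$ and the stated quotient of cohomology orders.

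I expect the main obstacle to be the exact bookkeeping of the long exact sequence: which kernels appear, and whether $\HHT{0}{H}{\iota}$ or $\HHT{-1}{H}{\pi}$ enters. I also expect the sign and exponent of the $\kdt{1}{\Theta}{\iota}$ term in the first step to be delicate. I would verify both by hand with a four-term exact sequence argument for each $H$, applying the standard alternating-product rule for orders in finite exact sequences.
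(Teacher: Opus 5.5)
Your proposal is correct and is essentially the paper's proof. The paper also uses $0\to\tors{M}\to M\to\mt{M}\to 0$ and takes $\rc{\Theta}{\mt{M}}=\rc{\Theta}{M}\rc{\Theta}{\tors{M}}^{-1}\kdt{1}{\Theta}{\iota}^{-2}$ from the multiplicativity-defect formula. It then applies Corollary \ref{cor:rcformulatf} to $\mt{M}$ and computes $\hht{0}{H}{\mt{M}}$ and $\hh{1}{H}{\mt{M}}$ from the long exact sequence exactly as you do, so that the $\kdt{1}{\Theta}{\iota}$ factors cancel. Your hand check via $[(\mt{M})^H:\mt{M^H}]=\card{\Ker\HH{1}{H}{\iota}}$ fixes the sign correctly as $\rc{\Theta}{M}=\rc{\Theta}{\tors{M}}\rc{\Theta}{\mt{M}}\kdt{1}{\Theta}{\iota}^{2}$, which is what makes that cancellation go through.
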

\begin{proof}
Consider the exact sequence
\begin{equation}\label{eq:torsiones}
0 \rightarrow \tors{M} \stackrel{\iota}{\longrightarrow} M \longrightarrow \mt{M}\rightarrow 0.
\end{equation}
On the one hand, by \cite[Proposition 2.12]{Caputo2025} we have
\begin{equation}\label{eq:rctf1}
\rc{\Theta}{\mt{M}} = \rc{\Theta}{M}\rc{\Theta}{\tors{M}}^{-1}\kdt{1}{\Theta}{\iota}^{-2}.
\end{equation}
On the other hand, by Corollary \ref{cor:rcformulatf}, we have
\begin{equation}\label{eq:rctf2}
\rc{\Theta}{\mt{M}}^{2m} = \rc{\Theta}{V}^{-1} \left(\hht{0}{\Theta}{\mt{M}}\hh{1}{\Theta}{\mt{M}}\right)^{-2m}
\end{equation}
for some finite $\Z[G]$-module $V$ with $V\cong\pdual{V}$. Moreover, from the exact cohomology sequence induced by \eqref{eq:torsiones}, we obtain
\begin{align*}
\hht{0}{H}{\mt{M}}&= \frac{\hht{0}{H}{M}}{\hht{0}{H}{\tors{M}}}\card{\Ker{\HHT{0}{H}{\iota}}}\card{\Ker{\HH{1}{H}{\iota}}},\\
\hh{1}{H}{\mt{M}} &=\frac{\hh{1}{H}{M}}{\hh{1}{H}{\tors{M}}}\card{\Ker{\HH{1}{H}{\iota}}}\card{\Ker{\HH{2}{H}{\iota}}}.
\end{align*}
We deduce that
\[
\hht{0}{\Theta}{\mt{M}}\hh{1}{\Theta}{\mt{M}} = \frac{\hht{0}{\Theta}{M}\hh{1}{\Theta}{M}}{\hht{0}{\Theta}{\tors{M}}\hh{1}{\Theta}{\tors{M}}}\kdt{0}{\Theta}{\iota}\kdt{1}{\Theta}{\iota}^2\kdt{2}{\Theta}{\iota}.
\]
We can then rewrite \eqref{eq:rctf2} as
\[
\rc{\Theta}{\mt{M}}^{2m} = \rc{\Theta}{V}^{-1} \left(\frac{\hht{0}{\Theta}{\tors{M}}\hh{1}{\Theta}{\tors{M}}}{\hht{0}{\Theta}{M}\hh{1}{\Theta}{M}}\right)^{2m}\kdt{0}{\Theta}{\iota}^{-2m}\kdt{1}{\Theta}{\iota}^{-4m}\kdt{2}{\Theta}{\iota}^{-2m}.
\]
Plugging this in \eqref{eq:rctf1}, we get the statement.
\end{proof}

Although we won't need it in the rest of the paper, we prove the following result on regulator constants and cohomology of finite modules.

\begin{proposition}
Let $M$ be a finite $\Z[G]$-module. Then
\[
\frac{\rc{\Theta}{M}}{\rc{\Theta}{\pdual{M}}} = \left(\frac{\hht{-1}{\Theta}{M}}{\hht{0}{\Theta}{M}}\right)^2.
\]
\end{proposition}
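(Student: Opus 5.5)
For a finite module the regulator constant is given directly by Definition \ref{def:rc}, so I would compute both sides explicitly rather than use Proposition \ref{prop:rcalt}.

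First I would specialise Definition \ref{def:rc} to a finite module $N$. Then $\tors{N}=N$, so $\mt{N^H}=0$ for every $H\leq G$, and the determinant of the pairing on the zero lattice is the empty determinant, equal to $1$. Hence
\[
\rc{\Theta}{N} = \prod_{H\leq G}\card{N^H}^{-2n_H}.
\]
Applying this to $N=M$ and $N=\pdual{M}$ (both finite) gives
\[
\frac{\rc{\Theta}{M}}{\rc{\Theta}{\pdual{M}}} = \prod_{H\leq G}\left(\frac{\card{(\pdual{M})^H}}{\card{M^H}}\right)^{2n_H}.
\]

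Next I would identify $(\pdual{M})^H$. With the action $hf(x)=f(h^{-1}x)$, an $H$-fixed homomorphism $f:M\to\Q/\Z$ is exactly one that vanishes on $I_HM$. Therefore $(\pdual{M})^H=\Hom(M_H,\Q/\Z)=\pdual{(M_H)}$. Since $M_H$ is a finite abelian group, its Pontryagin dual has the same order, so $\card{(\pdual{M})^H}=\card{M_H}$.

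Finally I would use the exact sequence defining Tate cohomology in degrees $-1$ and $0$:
\[
0\to\HHT{-1}{H}{M}\to M_H\xrightarrow{N_H}M^H\to\HHT{0}{H}{M}\to 0.
\]
All terms are finite, so this sequence gives $\card{M_H}/\card{M^H}=\hht{-1}{H}{M}/\hht{0}{H}{M}$. Substituting into the product yields
\[
\frac{\rc{\Theta}{M}}{\rc{\Theta}{\pdual{M}}} = \prod_{H\leq G}\left(\frac{\hht{-1}{H}{M}}{\hht{0}{H}{M}}\right)^{2n_H} = \left(\frac{\hht{-1}{\Theta}{M}}{\hht{0}{\Theta}{M}}\right)^{2},
\]
which is the claimed formula.

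There is no serious obstacle in this argument. The only points needing care are the convention that the determinant on $\mt{M^H}=0$ equals $1$, and the verification that $H$-invariants of the $\Q/\Z$-dual are the dual of the $H$-coinvariants.
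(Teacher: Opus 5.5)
Your proposal is correct and follows the paper's proof essentially step for step. Like the paper, you reduce the ratio to $\prod_{H\leq G}\bigl(\card{(\pdual{M})^H}/\card{M^H}\bigr)^{2n_H}$, identify $(\pdual{M})^H$ with $\pdual{(M_H)}$, and conclude from the exact sequence $0\to\HHT{-1}{H}{M}\to M_H\to M^H\to\HHT{0}{H}{M}\to 0$; you additionally spell out the empty-determinant convention and the invariants/coinvariants duality, which the paper leaves implicit.
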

\begin{proof}
For a subgroup $H\leq G$, observe that $\left(\pdual{M}\right)^H=\pdual{\left(M_H\right)}$ which is isomorphic to $M_H$. Moreover the norm $N_H=\sum_{h\in H} h$ induces an exact sequence
\[
0 \to \HHT{-1}{H}{M}\to M_H\to M^H \to \HHT{0}{H}{M}\to 0.
\]
Therefore
\begin{align*}
\frac{\rc{\Theta}{M}}{\rc{\Theta}{\pdual{M}}} &= \prod_{H\leq G} \left(\frac{\card{\left(\pdual{M}\right)^H}}{\card{M^H}}\right)^{2n_H}\\
&= \prod_{H\leq G} \left(\frac{\card{M_H}}{\card{M^H}}\right)^{2n_H}\\
&= \left(\frac{\hht{-1}{\Theta}{M}}{\hht{0}{\Theta}{M}}\right)^2. \qedhere
\end{align*} 
\end{proof}

\section{The dihedral case}\label{sec:dihedral}
In this section we show that the results of the previous section can be improved in the case where the group $G$ is dihedral. Let $q>1$ be an odd integer and let $D=D_q$ denote the dihedral group of order $2q$:
\begin{equation}\label{eq:dgroup}
D=D_q = \langle \rho, \sigma : \rho^q = \sigma^2 = 1, \sigma\rho\sigma^{-1}=\rho^{-1}\rangle, \quad \Rho = \Rho_q = \langle \rho \rangle \triangleleft D, \quad \Sigma = \langle\sigma\rangle.
\end{equation}
The group $D$ has the following Brauer relation
\begin{equation}\label{eq:drelation}
\Theta = \Theta_D = 1 +2D -\Rho - 2\Sigma.
\end{equation}

If $M$ is a uniquely $2$-divisible $\Z[D]$-module, then one can consider the idempotent decomposition
\[
M = M^+ \oplus M^-
\]
where
\[
M^+ = M^\Sigma = \frac{1+\sigma}{2}M, \quad M^- = \frac{1-\sigma}{2}M.
\]

\begin{proposition}\label{prop:finitemoduledihedral}
Let $M$ be a finite $\Z[D]$-module. Then
\begin{equation}\label{eq:drcfm} 
\frac{\card{M}\card{M^D}^2}{\card{M^\Rho}\card{M^\Sigma}^2} = \frac{\hht{0}{D}{M}}{\hht{-1}{D}{M}}.
\end{equation}
In particular
\[
\rc{\Theta}{M}= \left(\frac{\hht{-1}{D}{M}}{\hht{0}{D}{M}}\right)^{2} = \frac{\hht{-1}{\Theta}{M}}{\hht{0}{\Theta}{M}}.
\]
If, moreover, $M\cong \pdual{M}$ as $\Z[D]$-modules, then
\[
\frac{\card{M}\card{M^D}^2}{\card{M^\Rho}\card{M^\Sigma}^2} = \rc{\Theta}{M} = 1.
\]
\begin{proof}
We can analyse separately the $\ell$-primary components $M(\ell)$ of $M$, for $\ell$ prime. Assume first that $\ell \nmid q$. Then the left-hand side is $1$ for $M(\ell)$ , by \cite[Lemma 2.6]{CaputoNuccio2019}. As for the right-hand side, we have $\HHT{i}{\Rho}{M(\ell)}=1$ (since $\HHT{i}{\Rho}{M(\ell)}$ has exponent dividing $\card{\Rho}$) and
\[
\HHT{i}{D}{M(\ell)} \cong \HHT{i}{\Sigma}{M(\ell)}
\]
for every $i\in\Z$, by \cite[Lemma 2.9]{CaputoNuccio2019}. The right-hand side of \eqref{eq:drcfm} is therefore $1$ for $M(\ell)$, since the $\Sigma$-Herbrand quotient of a finite module is $1$. This settles the case $\ell\nmid q$.  

Suppose now that $M$ is $\ell$-primary, with $\ell \mid q$: in particular, $M$ is uniquely $2$-divisible. Therefore we can rewrite the left-hand side of \eqref{eq:drcfm} as
\[
\frac{\card{M}\card{M^D}^2}{\card{M^\Rho}\card{M^\Sigma}^2} = \frac{\card{M/M^\Rho}}{\card{\left(M/M^\Rho\right)^+}^2} = \frac{\card{\left(M/M^\Rho\right)^-}}{\card{\left(M/M^\Rho\right)^+}}.
\]
As for the right-hand side of \eqref{eq:drcfm}, we have
\begin{align*}
\frac{\hht{0}{D}{M}}{\hht{-1}{D}{M}} &= \frac{\card{M^D}\card{I_DM}}{\card{N_DM}\card{M[N_D]}}\\ 
&= \frac{\card{M^D} }{\card{M/I_DM}}\\
&= \frac{\card{M^D} \card{I_DM/I_\Rho M}}{\card{M/I_\Rho M}}\\ 
&= \frac{\card{\left(M^\Rho\right)^+}}{\card{\left(M_\Rho\right)^+}}\\ 
&= \frac{\card{\left(I_\Rho M\right)^+}}{\card{\left(M/M^\Rho\right)^+}}.  
\end{align*}
Here we have used that
\[
\left(M/I_\Rho M\right)^- = \frac{1-\sigma}{2} \left(M/I_\Rho M\right) = \left((1-\sigma)M + I_\Rho M\right)/I_\Rho M = I_DM/I_\Rho M.
\]
We are then left to prove that
\[
\card{\left(M/M^\Rho\right)^-} = \card{\left(I_\Rho M\right)^+}.
\]
Consider the $\Z[\Rho]$-module isomomorphism $f: M/M^\Rho \to I_\Rho M$ induced by multiplication by $(1-\rho)$. Let $x\in (M/M^\Rho)^-$ be represented by $m\in M$: in particular $\sigma m = -m + m'$ for some $m'\in M^\Rho$. Then $f(x)$ is fixed by $\sigma \rho^{-1}$: indeed,
\[
\sigma \rho ^{-1} \left((1-\rho) m\right) = \sigma (\rho ^{-1}-1) m = (\rho - 1)\sigma m = (\rho - 1)(-m  +m')= (1-\rho)m.
\]
Conversely, take $m''\in M$ and suppose that $(1-\rho)m''\in I_\Rho M$ is fixed by $\sigma\rho^{-1}$. In particular 
\[
(1-\rho)m'' = \sigma \rho ^{-1} \left((1-\rho) m''\right) = \sigma (\rho ^{-1}-1) m'' = -(1-\rho)\sigma m''.
\]
This implies that $(1+\sigma)m'' \in M^\Rho$ and therefore the class of $m''$ in $M/M^\Rho$ belongs to $(M/M^\Rho)^-$. Summarizing, we have shown that 
\[
(M/M^\Rho)^- \cong \left(I_\Rho M\right)^{\langle\sigma \rho^{-1}\rangle}
\]
as abelian groups. But, for any $D$-submodule $N\subseteq M$, multiplication by $\rho^t$ with $2t\equiv -1 \pmod q$ gives an isomorphism of abelian groups 
\[
N^{\langle\sigma \rho^{-1}\rangle} \cong N^\Sigma =
N^+
\]
since, if $n \in N^{\langle\sigma \rho^{-1}\rangle}$,
\[
\sigma(\rho^t n) = \sigma \rho^{t+1} \rho^{-1} n = \rho^{-t-1} \sigma \rho^{-1} n = \rho^{-t-1} n = \rho^t n.
\]
This concludes the proof of the first assertion of the proposition. The second assertion is just a reformulation of the first, using the fact that the Herbrand quotient of finite modules is trivial.

Finally, the last assertion follows by duality: for any $i\in\Z$ we have
\[
\HHT{i}{D}{\pdual{M}} \cong \pdual{\HHT{-1-i}{D}{M}}
\]
(see \cite[Proposition 3.1.1]{NSW2013}).
\end{proof}
\end{proposition}

\begin{lemma}\label{lemma:dcf} 
Let $f:M\to N$ be a homomorphism of finitely generated $\Z[D]$-modules. Then, for every $i \in \Z$, we have
\[
\kdt{i}{\Theta}{f}\kdt{i+2}{\Theta}{f} = 1 
\]
and, in particular,
\[
\hht{i}{\Theta}{M}\hht{i+2}{\Theta}{M} =1.
\]
\end{lemma}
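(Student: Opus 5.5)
The plan is to compute everything one prime at a time. All the groups $\HHT{i}{H}{M}$ are finite and killed by $\card{H}$, so each kernel $\Ker\HHT{i}{H}{f}$ splits into its $\ell$-primary parts. This gives
\[
\kdt{i}{\Theta}{f}=\prod_\ell \kdt{i}{\Theta}{f}(\ell),
\]
where $\kdt{i}{\Theta}{f}(\ell)$ is built from the $\ell$-parts of the kernels, and it suffices to treat each $\ell$ separately. The trivial subgroup contributes nothing, since its Tate cohomology vanishes. The relevant exponents are $n_D=2$, $n_\Rho=-1$ and $n_\Sigma=-2$. The second assertion will follow from the first by applying it to the zero map $f:M\to 0$, for which every kernel is the whole cohomology group.

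\emph{The prime $\ell=2$.} Since $\card{\Rho}=q$ is odd, $\HHT{i}{\Rho}{\cdot}$ has no $2$-part. Because $\Sigma$ is a Sylow $2$-subgroup with normal complement $\Rho$, restriction gives an isomorphism $\HHT{i}{D}{\cdot}(2)\cong\HHT{i}{\Sigma}{\cdot}$, as in \cite[Lemma 2.9]{CaputoNuccio2019}. This isomorphism is natural in the module, so it commutes with $f$ and identifies $\Ker\HHT{i}{D}{f}(2)$ with $\Ker\HHT{i}{\Sigma}{f}$. The $2$-contribution is therefore $\card{\Ker_D}^2\card{\Ker_\Sigma}^{-2}=1$, already for each single $i$.

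\emph{Odd primes $\ell\mid q$.} Here $\Sigma$ contributes nothing. Restriction identifies $\HHT{i}{D}{M}(\ell)$ with the $\Sigma$-invariants $\HHT{i}{\Rho}{M}(\ell)^\Sigma$, naturally in $M$, because $[D:\Rho]=2$ is prime to $\ell$. Set $K^i=\Ker\HHT{i}{\Rho}{f}(\ell)$. The kernel for $D$ is then $(K^i)^\Sigma=(K^i)^+$, using the decomposition into $\pm$-parts, which is available because $K^i$ is uniquely $2$-divisible. The key input is the periodicity of the cyclic group $\Rho$: cup product with a generator of $\HH{2}{\Rho}{\Z}\cong\Z/q$ gives isomorphisms $\HHT{i}{\Rho}{\cdot}\cong\HHT{i+2}{\Rho}{\cdot}$, natural in the module. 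However, $\sigma$ acts on $\HH{2}{\Rho}{\Z}\cong\HH{1}{\Rho}{\Q/\Z}=\Hom(\Rho,\Q/\Z)$ by $-1$, since $\sigma$ inverts $\rho$. Hence the isomorphism is $\Sigma$-equivariant up to the sign character, so $K^{i+2}\cong K^i$ with the $\pm$-parts exchanged. This yields $\card{(K^{i+2})^+}=\card{(K^i)^-}$. It follows that
\[
\card{(K^i)^+}\card{(K^{i+2})^+}=\card{K^i},
\]
so the $\ell$-contribution is
\[
\card{(K^i)^+}^2\card{(K^{i+2})^+}^2\,\card{K^i}^{-1}\card{K^{i+2}}^{-1}=\card{K^i}^2\card{K^i}^{-2}=1,
\]
using $\card{K^{i+2}}=\card{K^i}$. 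Primes not dividing $2q$ contribute nothing.

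The main obstacle is the odd-prime step. One must check carefully that the periodicity isomorphism twists the $\Sigma$-action by exactly the sign character and commutes with $f$ (naturality of cup product). One must also check that $\Sigma$ acts compatibly on $\HHT{i}{\Rho}{M}$ and on $\Ker\HHT{i}{\Rho}{f}$, so that the kernel of the $D$-map is precisely the $\Sigma$-invariant part of the $\Rho$-kernel. I would verify the sign concretely using the explicit periodic resolution of $\Rho$, on which conjugation by $\sigma$ acts.
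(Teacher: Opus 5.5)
Your proof is correct and follows the paper's argument. You work prime by prime, settle $\ell=2$ with the restriction isomorphism $\widehat{H}^i(D,\cdot)(2)\cong\widehat{H}^i(\Sigma,\cdot)$ of \cite[Lemma 2.9]{CaputoNuccio2019}, and for odd $\ell$ split the $\Rho$-kernel into a $D$-kernel in degree $i$ and one in degree $i+2$; the only difference is that you derive this splitting (which the paper cites as \cite[Proposition 2.1]{CaputoNuccio2019}) from the sign-twisted cup-product periodicity of $\Rho$, and close the count with $2$-periodicity of $\Rho$-cohomology rather than $4$-periodicity of $D$-cohomology.
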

\begin{proof}
To prove the first assertion, observe that, for any prime $\ell$, there is a functorial isomorphism
\begin{equation}\label{eq:zlcohom}
\HHT{i}{H}{M\otimes_\Z\Z_\ell} \cong \HHT{i}{H}{M}\otimes_\Z\Z_\ell.
\end{equation}
It is therefore enough to prove the assertion when $M$ and $N$ are $\Z_\ell[D]$-module for some prime $\ell$. When $\ell = 2$, $\HHT{i}{\Rho}{f}=0$ since $M$ is uniquely $q$-divisible. Moreover 
\[
\HHT{i}{D}{f} \cong \HHT{i}{\Sigma}{f}
\]
thanks to \cite[Lemma 2.9]{CaputoNuccio2019}, so the case $\ell = 2$ is proved. When $\ell \ne 2$, $M$ and $N$ are uniquely $2$-divisible: in particular $\HHT{i}{\Sigma}{f}=0$ and
\[
\HHT{i}{\Rho}{f} \cong \HHT{i}{D}{f} \oplus \HHT{i+2}{D}{f}
\]    
by \cite[Proposition 2.1]{CaputoNuccio2019}. Therefore
\begin{align*}
\kdt{i}{\Theta}{f}\kdt{i+2}{\Theta}{f} &= \frac{\hht{i}{D}{f}^2}{\hht{i}{\Rho}{f}}\frac{\hht{i+2}{D}{f}^2}{\hht{i+2}{\Rho}{f}}\\
&=\frac{\hht{i}{D}{f}^2}{\hht{i}{D}{f}\hht{i+2}{D}{f}}\frac{\hht{i+2}{D}{f}^2}{\hht{i+2}{D}{f}\hht{i+4}{D}{f}}\\
&=\frac{\hht{i}{D}{f}}{\hht{i+4}{D}{f}}
\end{align*}
where we have set $\hht{i}{H}{f}=\card{\Ker \HHT{i}{H}{f}}$ for any $H\leq D$. This implies the first assertion of the lemma, since the cohomology of $D$ is periodic of period $4$ (see \cite[Theorem XII.11.6]{CartanEilenberg1956}).
The second assertion follows from the first, taking $M=N$ and $f = 0$.
\end{proof}

\begin{theorem}\label{th:dihedralrccohom}
Let $M$ be  finitely generated $\Z[D]$-module. Then
\begin{equation}\label{eq:dihedralrccohomom}
\rc{\Theta}{M} = \left(\hht{0}{\Theta}{M}\hht{1}{\Theta}{M}\right)^{-1} = \frac{\hht{-1}{\Theta}{M}}{\hht{0}{\Theta}{M}}.
\end{equation}
\end{theorem}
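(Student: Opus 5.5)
We will derive the dihedral formula from Theorem \ref{th:rcformula}, with $G=D$ and $m=1$, together with Proposition \ref{prop:finitemoduledihedral} and Lemma \ref{lemma:dcf}.

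First, the second equality in \eqref{eq:dihedralrccohomom} is immediate from Lemma \ref{lemma:dcf}. Taking $i=-1$ gives $\hht{-1}{\Theta}{M}\hht{1}{\Theta}{M}=1$, so $\hht{1}{\Theta}{M}^{-1}=\hht{-1}{\Theta}{M}$. It therefore suffices to prove $\rc{\Theta}{M}=\hht{-1}{\Theta}{M}/\hht{0}{\Theta}{M}$.

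For the first equality, note that $\bexp{D}=1$ for dihedral groups, as recalled before Proposition \ref{prop:rcdual2nd}. So we may take $m=1$ in Theorem \ref{th:rcformula}, which gives
\[
\rc{\Theta}{M}^{2} = \rc{\Theta}{\tors{M}}^{2} \rc{\Theta}{V}^{-1} \left(\frac{\hht{0}{\Theta}{\tors{M}}\hht{1}{\Theta}{\tors{M}}}{\hht{0}{\Theta}{M}\hht{1}{\Theta}{M}}\right)^{2}\kdt{0}{\Theta}{\iota}^{-2}\kdt{2}{\Theta}{\iota}^{-2}.
\]
Each factor simplifies in the dihedral case:
\begin{itemize}
\item Since $V$ is finite and self-dual, the last assertion of Proposition \ref{prop:finitemoduledihedral} gives $\rc{\Theta}{V}=1$.
\item Lemma \ref{lemma:dcf} with $i=0$ gives $\kdt{0}{\Theta}{\iota}\kdt{2}{\Theta}{\iota}=1$.
\item Since $\tors{M}$ is finite, Proposition \ref{prop:finitemoduledihedral} gives $\rc{\Theta}{\tors{M}}=\hht{-1}{\Theta}{\tors{M}}/\hht{0}{\Theta}{\tors{M}}$. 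Combined with $\hht{1}{\Theta}{\tors{M}}=\hht{-1}{\Theta}{\tors{M}}^{-1}$ from Lemma \ref{lemma:dcf}, this yields
\[
\rc{\Theta}{\tors{M}}\,\hht{0}{\Theta}{\tors{M}}\hht{1}{\Theta}{\tors{M}} = \frac{\hht{-1}{\Theta}{\tors{M}}}{\hht{0}{\Theta}{\tors{M}}}\cdot\frac{\hht{0}{\Theta}{\tors{M}}}{\hht{-1}{\Theta}{\tors{M}}}=1 .
\]
\end{itemize}
Substituting these, the torsion contributions cancel and we are left with
\[
\rc{\Theta}{M}^2=\left(\hht{0}{\Theta}{M}\hht{1}{\Theta}{M}\right)^{-2}.
\]

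Both sides are positive rationals: regulator constants are positive, and the $\hht{i}{\Theta}{M}$ are ratios of cardinalities. Taking positive square roots gives the first equality in \eqref{eq:dihedralrccohomom}.

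The only step needing care is the claim $m=1$, which rests on $\bexp{D}=1$ for dihedral groups as cited. If one prefers not to rely on that citation, no care is needed: the argument works for any admissible $m$. Indeed $\rc{\Theta}{V}=1$ for every finite self-dual $V$, and Lemma \ref{lemma:dcf} kills the $\kdt{}{}{}$ factors for any exponent, so one obtains $\rc{\Theta}{M}^{2m}=(\hht{0}{\Theta}{M}\hht{1}{\Theta}{M})^{-2m}$. Taking positive $2m$-th roots again gives the result. One should also check that the notation $\hh{1}{\Theta}{M}$ in Theorem \ref{th:rcformula} agrees with $\hht{1}{\Theta}{M}$; it does, since Tate cohomology coincides with ordinary cohomology in degree $1$.
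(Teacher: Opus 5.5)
Your proposal is correct and follows the paper's proof essentially step by step. Both arguments plug into Theorem \ref{th:rcformula} the identity $\rc{\Theta}{V}=1$ and the torsion cancellation from Proposition \ref{prop:finitemoduledihedral}, use Lemma \ref{lemma:dcf} to kill $\kdt{0}{\Theta}{\iota}\kdt{2}{\Theta}{\iota}$, and get the second equality from the same lemma. The only differences are cosmetic: you specialize to $m=1$, which is harmless and, as you note, unnecessary, and you make explicit the extraction of the positive $2m$-th root that the paper leaves implicit.
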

\begin{proof}
By Theorem \ref{th:rcformula}, there exists a finite $\Z[D]$-module $V$ with $V\cong \pdual{V}$ such that
\begin{equation}\label{eq:d0}
\rc{\Theta}{M}^{2m} = \rc{\Theta}{\tors{M}}^{2m} \rc{\Theta}{V}^{-1} \left(\frac{\hht{0}{\Theta}{\tors{M}}\hh{1}{\Theta}{\tors{M}}}{\hht{0}{\Theta}{M}\hh{1}{\Theta}{M}}\right)^{2m}\kdt{0}{\Theta}{\iota}^{-2m}\kdt{2}{\Theta}{\iota}^{-2m}
\end{equation} 
where $\iota:\tors{M}\to M$ is the inclusion. Proposition \ref{prop:finitemoduledihedral}, applied to $V$, gives
\begin{equation}\label{eq:d1}
\rc{\Theta}{V} = 1. 
\end{equation} 
Applying again Proposition \ref{prop:finitemoduledihedral} to $\tors{M}$ and using Lemma \ref{lemma:dcf} we get
\begin{equation}\label{eq:d2}
\rc{\Theta}{\tors{M}}^2\left(\hht{0}{\Theta}{\tors{M}}\hh{1}{\Theta}{\tors{M}}\right)^{2} = \rc{\Theta}{\tors{M}}^2\left(\frac{\hht{0}{\Theta}{\tors{M}}}{\hht{-1}{\Theta}{\tors{M}}}\right)^{2}=1.
\end{equation}
Using once more Lemma \ref{lemma:dcf} we also obtain
\begin{equation}\label{eq:d3}
\kdt{0}{\Theta}{\iota}^{-2}\kdt{2}{\Theta}{\iota}^{-2}=1.
\end{equation}
Plugging \eqref{eq:d1}, \eqref{eq:d2} and \eqref{eq:d3} in \eqref{eq:d0}, we get the first equality of the theorem. The second equality comes from Lemma \ref{lemma:dcf}.
\end{proof}

\begin{remark}\label{rmk:dihedralrcval}
Let $\ell$ be a prime. Theorem \ref{th:dihedralrccohom} implies that $v_\ell(\rc{\Theta}{M})=0$ if $\ell\nmid q$ (see \cite[Proposition 3.9]{Bartel2012}). It is clear that if $\ell\nmid 2q$, then the $\ell$-adic valuation of the right-hand side of \eqref{eq:dihedralrccohomom} is $0$. If $\ell=2$, the same holds by \cite[Lemma 2.9]{CaputoNuccio2019}.
\end{remark}

\begin{remark}
A result of Torzewski (see \cite[Theorem 6.8]{Torzewski2020}) adapted to our context implies that the genus\footnote{Recall that two $\Z$-torsion free $\Z[D]$-modules $M_1$ and $M_2$ belong to the same genus if and only if $M_1\otimes_\Z \Z_\ell\cong M_2\otimes_\Z \Z_\ell$ as $\Z_\ell[D]$-modules for any prime $\ell$. One can show that, if $M_1$ and $M_2$ are in the same genus, then $\rc{\Theta}{M_1}=\rc{\Theta}{M_2}$.} of a finitely generated $\Z$-torsion free $\Z[D]$-module $M$ is determined by the following data, for every prime $\ell$:
\begin{itemize}
	\item the $\Q_\ell[D]$-isomorphism class of $M\otimes_\Z \Q_\ell$;
	\item for $\ell\mid q$, the $\ell$-adic valuation $v_\ell(\rc{\Theta_H}{M})$ where $H\leq D$ runs over the subgroups of the form $D_{q(\ell)}$ where $q(\ell)$ is the exact power of $\ell$ dividing $q$;
	\item  for $\ell\mid 2q$, the Yakovlev diagram of $\Z[N_D(D(\ell))]$-modules
		\[
		\begin{tikzcd}
		\HH{1}{D(\ell)}{M\otimes \Z_\ell}\arrow[r,shift left=2pt,"res"] & 
		\arrow[l,shift left=2pt,"cor"] \HH{1}{D(\ell)^{\ell}}{M\otimes \Z_\ell}\arrow[r,shift left=2pt,"res"] &
		\arrow[l,shift left=2pt,"cor"] \cdots \arrow[r,shift left=2pt,"res"] & \arrow[l,shift left=2pt,"cor"] \HH{1}{D(\ell)^{t(\ell)}}{M\otimes \Z_\ell}
		\end{tikzcd}
		\]
	where $D(\ell)$ denotes the $\ell$-Sylow subgroup of $D$, having order $\ell^{t(\ell)}$, and $N_D(D(\ell))$ is the normaliser of $D(\ell)$ in $D$.
\end{itemize} 

In view of this, Theorem \ref{th:dihedralrccohom} implies that the genus of $M$ is fully determined by the $\Q_\ell[D]$-isomorphism class of $M\otimes_\Z \Q_\ell$ together with purely cohomological information.
\end{remark}

For a finitely generated $\Z$-module $A$, let $\rk{A}$ denote the rank of $A$  (\ie the dimension of $A\otimes_\Z \Q$ as a $\Q$-vector space). We shall now use the above corollary to obtain explicit bounds on the values of $\rc{\Theta}{M}$ in terms of $\rk{M^H}$ for $H\leq D$. Recall that the $\Rho$-Herbrand quotient of $M$ is defined as  
\[
\hq{\Rho}{M} = \frac{\hht{0}{\Rho}{M}}{\hht{-1}{\Rho}{M}}.
\]
By a classical result of Rosen (see also \cite[Theorem 10.3]{Chevalley1953}), the Herbrand quotient of $M$ can be expressed in terms of $\Z$-ranks of submodules of $M$ as follows.
\begin{lemma}[{\cite[Lemma]{Rosen1966}}]\label{lemma:rosen}
Let $M$ be a finitely generated $\Z[\Rho]$-module and let $q = \prod_\ell \ell^{t(\ell)}$ be the prime decomposition of $q$. Then
\[
v_\ell(\hq{\Rho}{M}) = t(\ell)r(q) - \sum_{i=1}^{t(\ell)}\frac{r(q/\ell^i)-r(q/\ell^{i-1})}{\phi(\ell^i)}
\]
where, for a divisor $d$ of $q$, $r(d)$ is the $\Z$-rank of the submodule of $M$ fixed by the subgroup of $\Rho$ of order $d$.
\end{lemma}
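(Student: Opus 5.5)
The plan is to reduce everything to the rational representation $M\otimes_\Z\Q$. Recall two standard properties of the Herbrand quotient of the cyclic group $\Rho$:
\begin{itemize}
\item it is multiplicative in short exact sequences;
\item it equals $1$ on finite modules.
\end{itemize}
Together these show that $\hq{\Rho}{M}$ depends only on the $\Q[\Rho]$-module $M\otimes_\Z\Q$. Indeed, two lattices in the same rational representation become comparable after passing to a common sublattice of finite index, and the torsion of $M$ contributes trivially. Hence $\log\hq{\Rho}{\cdot}$ defines a homomorphism from $\repring{\Q}{\Rho}$ to $\mathbb{R}$.

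Since $\Rho$ is cyclic, the irreducible $\Q[\Rho]$-modules are the fields $\Q(\zeta_e)$ for $e\mid q$, with $\rho$ acting by a primitive $e$-th root of unity. So write
\[
M\otimes\Q\cong\bigoplus_{e\mid q}\Q(\zeta_e)^{a_e}.
\]

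\emph{Step 1: the values $\hq{\Rho}{\Q(\zeta_e)}$.} I will use permutation modules. By Shapiro's lemma, $\HHT{i}{\Rho}{\Z[\Rho/H]}\cong\HHT{i}{H}{\Z}$, so $\hq{\Rho}{\Z[\Rho/H]}=\card{H}$. Rationally, if $[\Rho:H]=n$, then
\[
\Q[\Rho/H]\cong\bigoplus_{e\mid n}\Q(\zeta_e),
\]
so $\sum_{e\mid n}\log h(e)=\log(q/n)$ for every $n\mid q$. Möbius inversion then gives:
\begin{itemize}
\item $h(1)=q$;
\item $h(\ell^k)=\ell^{-1}$ for a prime power $\ell^k>1$, since $\log(q/\ell^k)-\log(q/\ell^{k-1})=-\log\ell$;
\item $h(e)=1$ when $e$ has at least two distinct prime factors, since the Möbius sum of $\log(q/d)=\log q-\log d$ over $d\mid e$ vanishes for such $e$.
\end{itemize}
Consequently
\[
v_\ell(\hq{\Rho}{M})=t(\ell)\,a_1-\sum_{i=1}^{t(\ell)}a_{\ell^i}.
\]

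\emph{Step 2: express the $a_e$ through ranks.} The subgroup of order $d$ acts trivially on $\Q(\zeta_e)$ exactly when $e\mid q/d$, and otherwise it has no nonzero fixed vectors there. Hence
\[
r(d)=\sum_{e\mid q/d}a_e\,\phi(e).
\]
Taking $d=q$ gives $a_1=r(q)$. Comparing $d=q/\ell^i$ with $d=q/\ell^{i-1}$, the only divisor $e$ of $\ell^i$ that does not divide $\ell^{i-1}$ is $e=\ell^i$. Therefore
\[
r(q/\ell^i)-r(q/\ell^{i-1})=a_{\ell^i}\,\phi(\ell^i).
\]
Substituting into the formula from Step 1 yields exactly the stated expression.

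The main obstacle is the first reduction, namely showing that the Herbrand quotient factors through $\repring{\Q}{\Rho}$. This rests on the existence of an injection with finite cokernel between lattices that are rationally isomorphic, combined with multiplicativity of the Herbrand quotient and its triviality on finite modules; this is the classical Chevalley argument. The Möbius computation in Step 1 is routine once the permutation-module values are known.
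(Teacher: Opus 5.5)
Your proof is correct. The paper gives no proof of this lemma: it quotes Rosen's lemma, with a pointer to Chevalley. So you have written a self-contained derivation of a result the paper imports.

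Your route is the classical one.
\begin{enumerate}
\item $\hq{\Rho}{\cdot}$ depends only on $M\otimes_\Z\Q$. This follows from multiplicativity, triviality on finite modules, and the existence of an injection with finite cokernel between rationally isomorphic lattices.
\item The resulting homomorphism on $\repring{\Q}{\Rho}$ is evaluated on the irreducibles $\Q(\zeta_e)$.
\item The multiplicities $a_e$ are converted into the ranks $r(d)$.
\end{enumerate}

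A nice feature is that you never compute the cohomology of $\Z[\zeta_e]$ directly. Shapiro's lemma gives the value $\card{H}$ on $\Z[\Rho/H]$. M\"obius inversion over the divisors of $q$, via the identity $\sum_{d\mid e}\mu(e/d)\log d=\Lambda(e)$, then yields the values $q$, $\ell^{-1}$ and $1$. This is exactly the fact that permutation modules span $\repring{\Q}{\Rho}$ when $\Rho$ is cyclic, which fits the Artin-induction viewpoint the paper uses elsewhere. It also makes clear why only $r(q)$ and the $r(q/\ell^i)$ enter $v_\ell$: irreducibles $\Q(\zeta_e)$ with $e$ not a prime power contribute trivially. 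Your intermediate identity $v_\ell(\hq{\Rho}{M})=t(\ell)a_1-\sum_{i=1}^{t(\ell)}a_{\ell^i}$ is arguably the cleaner form of the lemma. Citing the result, as the paper does, only buys brevity.

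One notational point to tidy. Taken literally, $\hq{\Rho}{\Q(\zeta_e)}$ equals $1$, because the Tate cohomology of a $\Q$-vector space vanishes. You should define $h(e)$ explicitly as the Herbrand quotient of a $\Rho$-stable lattice such as $\Z[\zeta_e]$. Equivalently, it is the value of your homomorphism on the class of $\Q(\zeta_e)$, which is clearly what you intend.
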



\begin{theorem}\label{th:bounds}
Let $M$ be a finitely generated $\Z[D]$-module and let $\ell$ be a prime. Then
\[
-L_\ell(M)\leq v_\ell(\rc{\Theta}{M})\leq U_\ell(M) 
\]
where $L_\ell(M) = U_\ell(M) = 0$ if $\ell \nmid q$ and 
\begin{align*}
L_\ell(M) &= 2v_\ell(\card{\tors{M}^D/q}) +2\rk{M^D}v_\ell(q) -v_\ell(\hq{\Rho}{M})\\
U_\ell(M) &= 2v_\ell(\card{\tors{M}^\Rho/q}) + 2\rk{M^\Rho}v_\ell(q) -v_\ell(\hq{\Rho}{M})
\end{align*}	
if $\ell \mid q$, with $v_\ell(\hq{\Rho}{M})$ given in Lemma \ref{lemma:rosen}.
\end{theorem}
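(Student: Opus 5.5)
We start from Theorem \ref{th:dihedralrccohom} in the form
\[
\rc{\Theta}{M} = \frac{\hht{-1}{\Theta}{M}}{\hht{0}{\Theta}{M}} = \frac{\hht{-1}{D}{M}^2\,\hht{0}{\Rho}{M}\,\hht{0}{\Sigma}{M}^2}{\hht{0}{D}{M}^2\,\hht{-1}{\Rho}{M}\,\hht{-1}{\Sigma}{M}^2},
\]
using that the trivial group has trivial Tate cohomology. The case $\ell\nmid q$ is Remark \ref{rmk:dihedralrcval}, so we may assume $\ell\mid q$ and in particular $\ell$ odd. Then the $\ell$-parts of the $\Sigma$-cohomology groups vanish, and
\[
v_\ell(\rc{\Theta}{M}) = 2v_\ell(\hht{-1}{D}{M}) - 2v_\ell(\hht{0}{D}{M}) + v_\ell(\hq{\Rho}{M}).
\]
By \eqref{eq:zlcohom} we may replace $M$ by $M\otimes\Z_\ell$, which is uniquely $2$-divisible. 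The decomposition $\HHT{i}{\Rho}{M}\cong\HHT{i}{D}{M}\oplus\HHT{i+2}{D}{M}$ used in Lemma \ref{lemma:dcf}, together with Lemma \ref{lemma:dcf} itself, gives
\[
\hht{0}{\Rho}{M}=\hht{0}{D}{M}\hht{2}{D}{M}, \qquad \hht{-1}{\Rho}{M}=\hht{-1}{D}{M}\hht{1}{D}{M}.
\]
This lets us trade quantities between $D$ and $\Rho$ as needed.

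For the upper bound we drop the nonpositive term $-2v_\ell(\hht{0}{D}{M})$ and bound $2v_\ell(\hht{-1}{D}{M})$. Since $\HHT{-1}{D}{M}$ is a direct summand of $\HHT{-1}{\Rho}{M}$, it suffices to bound $\hht{-1}{\Rho}{M}$. This group is a subquotient of $M_\Rho$ that is killed by $q$: it is the kernel of the norm on $M_\Rho$, hence contained in $\tors{(M_\Rho)}$. The key step is the estimate
\[
v_\ell(\hht{-1}{\Rho}{M}) \le v_\ell(\card{\tors{M}^\Rho/q}) + \rk{M^\Rho}v_\ell(q),
\]
where $\tors{M}^\Rho/q$ is the quotient of $\tors{M}^\Rho$ by $q$ (whose order is the order of its $q$-torsion). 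Note that $\rk{M_\Rho}=\rk{M^\Rho}$. To prove the estimate, we would use $\hht{-1}{\Rho}{M}=\hht{0}{\Rho}{M}/\hq{\Rho}{M}$ and bound
\[
\HHT{0}{\Rho}{M}=M^\Rho/N_\Rho M
\]
instead. This is a quotient of $M^\Rho/qM^\Rho$, because $N_\Rho$ acts as $q$ on $M^\Rho$, and $\card{M^\Rho/q} = q^{\rk{M^\Rho}}\card{\tors{M}^\Rho/q}$. This already gives the bound on $2v_\ell(\hht{0}{\Rho}{M})$.

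We therefore rewrite everything in terms of $\hht{0}{\Rho}{M}$:
\[
v_\ell(\rc{\Theta}{M}) = 2v_\ell(\hht{-1}{D}{M}) - 2v_\ell(\hht{0}{D}{M}) + v_\ell(\hq{\Rho}{M}).
\]
The hardest step is matching the exact combination: we need $2v_\ell(\hht{-1}{D}{M}) \le 2v_\ell(\hht{0}{\Rho}{M}) - 2v_\ell(\hq{\Rho}{M})$, i.e.\ $\hht{-1}{D}{M}\le\hht{-1}{\Rho}{M}$, which holds by the summand decomposition. Hence
\[
v_\ell(\rc{\Theta}{M})\le 2v_\ell(\hht{0}{\Rho}{M}) - v_\ell(\hq{\Rho}{M}) \le U_\ell(M).
\]

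The lower bound is symmetric. We drop $2v_\ell(\hht{-1}{D}{M})\ge0$, which gives
\[
v_\ell(\rc{\Theta}{M}) \ge -2v_\ell(\hht{0}{D}{M}) + v_\ell(\hq{\Rho}{M}).
\]
Then we bound $\hht{0}{D}{M}$ via $\HHT{0}{D}{M}=M^D/N_DM$, which is a quotient of $M^D/qM^D$ because $N_D=N_\Sigma N_\Rho$ and, $2$ being a unit, $N_DM\supseteq qM^D$ up to $2$-power index. This yields
\[
v_\ell(\hht{0}{D}{M}) \le v_\ell(\card{\tors{M}^D/q})+\rk{M^D}v_\ell(q),
\]
so $v_\ell(\rc{\Theta}{M})\ge -L_\ell(M)$. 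Lemma \ref{lemma:rosen} then expresses $v_\ell(\hq{\Rho}{M})$ in terms of ranks.

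The main obstacle is making sure the sign conventions in $L_\ell$ and $U_\ell$ match the signs of the $v_\ell(\hq{\Rho}{M})$ terms above. If the upper bound comes out with $+v_\ell(\hq{\Rho}{M})$ instead, we would bound $\hht{-1}{D}{M}$ directly via $\tors{(M_D)}$ and $\rk{M^\Rho}$ rather than passing through $\hht{0}{\Rho}{M}$.
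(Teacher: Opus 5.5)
Your argument is correct and is essentially the paper's proof. For $\ell\mid q$ you reduce to $v_\ell(\rc{\Theta}{M})=2v_\ell(\hht{-1}{D}{M})-2v_\ell(\hht{0}{D}{M})+v_\ell(\hq{\Rho}{M})$. For the upper bound you drop $-2v_\ell(\hht{0}{D}{M})$ and use $v_\ell(\hht{-1}{D}{M})\le v_\ell(\hht{-1}{\Rho}{M})=v_\ell(\hht{0}{\Rho}{M})-v_\ell(\hq{\Rho}{M})$, with $\HHT{0}{\Rho}{M}$ a quotient of $M^\Rho/q$. For the lower bound you drop $2v_\ell(\hht{-1}{D}{M})$ and use that $\HHT{0}{D}{M}$ is a quotient of $M^D/2q$. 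The signs do already match $U_\ell(M)$ and $L_\ell(M)$.

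One correction. Your displayed ``key step'' $v_\ell(\hht{-1}{\Rho}{M})\le v_\ell(\card{\tors{M}^\Rho/q})+\rk{M^\Rho}v_\ell(q)$ is false as stated. For $M=\ker(\Z[D/\Sigma]\to\Z)$ one has $M^\Rho=0$ but $\HHT{-1}{\Rho}{M}\cong\Z/q$. The estimate needs the extra term $-v_\ell(\hq{\Rho}{M})$, which is exactly what your final chain through $\hht{0}{\Rho}{M}$ supplies. You should delete that display and the hedging last paragraph.
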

\begin{proof}
This is essentially a reformulation of \cite[Corollary 3.15]{CaputoNuccio2019}, we work out the details for the commodity of the reader. First note that $\rc{\Theta}{M}=1$ if $\ell \nmid q$ (see Remark \ref{rmk:dihedralrcval}). If, instead, $\ell\mid q$, from Theorem \ref{th:dihedralrccohom} we have
\[
v_\ell(\rc{\Theta}{M}) = v_\ell\left(\left(\frac{\hht{-1}{D}{M}}{\hht{0}{D}{M}}\right)^2\frac{\hht{0}{\Rho}{M}}{\hht{-1}{\Rho}{M}}\right)=2v_\ell\left(\frac{\hht{-1}{D}{M}}{\hht{0}{D}{M}}\right) +v_\ell(\hq{\Rho}{M}).
\]
Observe that $\HHT{0}{\Rho}{M}=M^\Rho/N_\Rho M$ is a quotient of $M^\Rho$ annihilated by $q$. We therefore have 
\begin{align*}
v_\ell\left(\frac{\hht{-1}{D}{M}}{\hht{0}{D}{M}}\right)&\leq
v_\ell\bigl(\hht{-1}{D}{M}\bigr)\\
&\leq v_\ell\bigl(\hht{-1}{\Rho}{M}\bigr) = v_\ell\bigl(\hht{0}{\Rho}{M}\bigr) - v_\ell(\hq{\Rho}{M})\\ 
&\leq v_\ell(\card{\tors{M}^\Rho/q}) + \rk{M^\Rho}v_\ell(q) - v_\ell(\hq{\Rho}{M}).
\end{align*}
Similarly, $\HHT{0}{D}{M}=M^D/N_DM$ is a quotient of $M^D$ annihilated by $q$. Therefore
\begin{align*}
v_\ell\left(\frac{\hht{-1}{D}{M}}{\hht{0}{D}{M}}\right)&\geq -v_\ell\bigl(\hht{0}{D}{M}\bigr)\\
&\geq-v_\ell(\card{\tors{M}^D/q}) - \rk{M^D}v_\ell(q).\qedhere
\end{align*}
\end{proof}


\section{Arithmetic applications}\label{sec:applications}
In this section, we will apply the results of the previous section to various arithmetic contexts. We keep the notation of the previous section for $D, \Rho, \Sigma$ and $\Theta$ (see \eqref{eq:dgroup} and \eqref{eq:drelation}). Moreover, in this section $L/k$ denotes a Galois extension of number fields with Galois group isomorphic to $D$. We set $K=L^\Sigma$ and $F=L^\Rho$. 

\subsection{Ring of integers of number fields}
For any subgroup $H\leq D$, we denote by $\OO{L^H}{}$ the ring of integers of $L^H$. 

\begin{theorem}\label{th:rif}
Let $L/k$ be a Galois extension of number fields with Galois group isomorphic to $D$. We have
\[
\hht{i}{D}{\OO{L}{}} = \hht{j}{D}{\OO{L}{}} \quad \text{if $i+j\equiv -1 \pmod 4$}.
\]
\end{theorem}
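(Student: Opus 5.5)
The plan is to combine Theorem \ref{th:dihedralrccohom} with the vanishing of the regulator constant of $\OO{L}{}$, and then separate the $\ell$-primary parts of the cohomology. By Fröhlich's theorem (recalled in the Introduction), $\OO{L}{}$ is factor equivalent to a free $\Z[D]$-module, so $\rc{\Theta}{\OO{L}{}}=1$. Theorem \ref{th:dihedralrccohom} then gives
\[
\hht{-1}{\Theta}{\OO{L}{}}=\hht{0}{\Theta}{\OO{L}{}}.
\]
Since $\HHT{i}{D}{\cdot}$ has period $4$, the statement amounts to two equalities: $\hht{0}{D}{\OO{L}{}}=\hht{-1}{D}{\OO{L}{}}$ and $\hht{1}{D}{\OO{L}{}}=\hht{2}{D}{\OO{L}{}}$. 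Using \eqref{eq:zlcohom}, I will check them one prime $\ell$ at a time, working with the $\ell$-parts of these orders. Only primes $\ell\mid 2q$ matter, because all these groups are killed by $2q$.

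\emph{Case $\ell=2$.} By \cite[Lemma 2.9]{CaputoNuccio2019}, the $2$-part of $\HHT{i}{D}{\OO{L}{}}$ is isomorphic to $\HHT{i}{\Sigma}{\OO{L}{}}$ (which is a $2$-group). Cohomology of the cyclic group $\Sigma$ is $2$-periodic. Moreover $\OO{L}{}\otimes\Q\cong\Q[D]^{[k:\Q]}$ is free over $\Q[\Sigma]$, so the Herbrand quotient of $\OO{L}{}$ with respect to $\Sigma$ is $1$: it depends only on the rational representation, and it is trivial for $\Z[\Sigma]$. Hence all $\HHT{i}{\Sigma}{\OO{L}{}}$ have the same order, and the $2$-parts of $\hht{i}{D}{\OO{L}{}}$ are independent of $i$.

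\emph{Case $\ell\mid q$.} Now $\OO{L}{}\otimes\Z_\ell$ is uniquely $2$-divisible. So $\Sigma$-cohomology vanishes, the trivial subgroup contributes nothing, and by \cite[Proposition 2.1]{CaputoNuccio2019} we have $\hht{i}{\Rho}{\cdot}=\hht{i}{D}{\cdot}\hht{i+2}{D}{\cdot}$ on $\ell$-parts. Write $h^i$ for the $\ell$-part of $\hht{i}{D}{\OO{L}{}}$. Exactly as in the proof of Lemma \ref{lemma:dcf}, the $\ell$-part of $\hht{i}{\Theta}{\OO{L}{}}$ is $h^i/h^{i+2}$. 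The relation from Fröhlich then reads
\[
h^{-1}/h^{1}=h^{0}/h^{2}.
\]
A second relation comes from the Herbrand quotient for $\Rho$. Since $\OO{L}{}\otimes\Q$ is free over $\Q[\Rho]$, we get $\hq{\Rho}{\OO{L}{}}=1$ (for instance via Lemma \ref{lemma:rosen}, since all the ranks $r(d)$ equal $2[k:\Q]q/d$ and the formula gives $0$). This yields $h^0h^2=h^{-1}h^1$. Multiplying the two relations gives $(h^{-1})^2=(h^0)^2$, hence $h^{-1}=h^0$, and then $h^1=h^2$.

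Combining the two cases gives both required equalities, and periodicity of order $4$ yields the statement for all $i+j\equiv-1\pmod 4$. The main point requiring care is the bookkeeping of $\ell$-parts: the identification of $\hht{i}{\Theta}{\cdot}$ with $h^i/h^{i+2}$ for $\ell\mid q$. This identification is taken directly from the computation in Lemma \ref{lemma:dcf}, applied to $f=0$.
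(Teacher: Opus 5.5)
Your proof is correct and uses the same ingredients as the paper's proof. These are Fr\"ohlich's $\rc{\Theta}{\OO{L}{}}=1$ combined with Theorem \ref{th:dihedralrccohom}, then \cite[Proposition 2.1 and Lemma 2.9]{CaputoNuccio2019}, and finally the triviality of the $\Sigma$- and $\Rho$-Herbrand quotients of $\OO{L}{}$, which you rederive from the normal basis theorem where the paper cites Yokoi's theorem for cyclic groups. The only difference is organizational: you split into $\ell$-parts from the start, whereas the paper obtains $\hht{0}{D}{\OO{L}{}}=\hht{-1}{D}{\OO{L}{}}$ globally and then gets $\hht{1}{D}{\OO{L}{}}=\hht{2}{D}{\OO{L}{}}$ by comparing both $\hht{0}{D}{\OO{L}{}}\hht{1}{D}{\OO{L}{}}$ and $\hht{0}{D}{\OO{L}{}}\hht{2}{D}{\OO{L}{}}$ with $\hht{0}{\Rho}{\OO{L}{}}\hht{0}{\Sigma}{\OO{L}{}}^2$.
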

\begin{proof}
First of all note that we only need to prove the assertion for $i,j \in \{-1, 0, 1, 2\}$ since the cohomology of $D$ is periodic of period $4$.
By a classical result of Fröhlich (see \cite[Theorem 7 (Additive)]{Frohlich1989}), the ring of integers is \textit{factor-equivalent} to a free $\Z[G]$-module. In the language of regulator constants, this is equivalent to $\rc{\Theta}{\OO{L}{}}=1$ (see \cite[Corollary 2.12]{Bartel2014}). Therefore Theorem \ref{th:dihedralrccohom} implies
\[
\hht{0}{\Theta}{\OO{L}{}}\hht{1}{\Theta}{\OO{L}{}}=\frac{\hht{0}{\Theta}{\OO{L}{}}}{\hht{-1}{\Theta}{\OO{L}{}}} = 1.  
\] 
Since $\Sigma$ and $\Rho$ are cyclic we further have
\begin{equation}\label{eq:yokoi}
\hht{i}{\Sigma}{\OO{L}{}}=\hht{j}{\Sigma}{\OO{L}{}} \quad \text{and} \quad \hht{i}{\Rho}{\OO{L}{}}=\hht{j}{\Rho}{\OO{L}{}}
\end{equation}
for any $i,j\in\Z$ (see \cite[Theorem 3]{Yokoi1964}). We deduce 
\begin{equation}\label{eq:h0h1ri}
\hht{0}{D}{\OO{L}{}}\hht{1}{D}{\OO{L}{}} = \hht{0}{\Rho}{\OO{L}{}}\hht{0}{\Sigma}{\OO{L}{}}^2 
\end{equation}
and
\begin{equation}\label{eq:h0h-1ri}
\hht{0}{D}{\OO{L}{}}=\hht{-1}{D}{\OO{L}{}}.
\end{equation}
We also have
\begin{equation}\label{eq:h0h2ri}
\hht{0}{D}{\OO{L}{}}\hht{2}{D}{\OO{L}{}} = \hht{0}{\Rho}{\OO{L}{}}\hht{0}{\Sigma}{\OO{L}{}}^2 
\end{equation}
thanks to \cite[Proposition 2.1 and Lemma 2.9]{CaputoNuccio2019} and \eqref{eq:yokoi}. Combining \eqref{eq:h0h-1ri}, \eqref{eq:h0h1ri} and \eqref{eq:h0h2ri} we conclude the proof.
\end{proof}

\begin{remark}
Yokoi conjectured that, for any $i,j\in\Z$, 
\[
\hht{i}{D}{\OO{L}{}}=\hht{j}{D}{\OO{L}{}}
\] 
(see \cite[p. 1]{Yokoi1964}). This conjectured was shown not to hold by Lee and Madan (see \cite{LeeMadan1969}): they showed that, when $k=\Q$ and $L=\Q(\zeta_3, \sqrt[3]{2})$ with $\zeta_3$ a primitive $3$-rd root of unity, one has
\[
\hht{0}{D}{\OO{L}{}} = 3 \quad \text{and} \quad \hht{1}{D}{\OO{L}{}} = 1.
\]
Their proof is based on the analysis of the Galois action on an explicit integral basis of $\OO{L}{}$. 
Using \eqref{eq:h0h1ri}, we can give an alternative proof which can also be easily generalised to other radical extensions of the form $\Q(\zeta_3, \sqrt[3]{a})$ for $a\in \Q$. We first compute $\HHT{0}{\Rho}{\OO{L}{}} = \OO{F}{}/\tr{L/F}(\OO{L}{})$ where $\tr{L/F}$ is the trace of $L/F$. We know that $\tr{L/F}(\OO{L}{})$ is the smallest ideal of $F$ dividing the different $\diff{L/F}$ of $L/F$ (see \cite[Theorem 1]{Yokoi1960}). The different is easily determined once the discriminant $\disc{L/F}$ is known and the latter can be deduced from the conductor $\cond{L/F}$ through the conductor-discriminant formula. By a result of Coleman-McCallum we have $v_\mathfrak{p}(\cond{L/F})=2$ where $\mathfrak{p}$ is the prime of $F$ above $3$ (see \cite[Theorem 6.1]{ColemanMcCallum1988}). Therefore $v_\mathfrak{p}(\disc{L/F})=4$ and $v_\mathfrak{P}(\diff{L/F})=4$ where $\mathfrak{P}$ is the prime of $L$ above $\mathfrak{p}$. Thus $v_\mathfrak{p}(\tr{L/F}(\OO{L}{}))=1$ and 
\begin{equation}\label{eq:h0radical}
\hht{0}{\Rho}{\OO{L}{}} = 3.
\end{equation} 
Now note that $\hht{0}{\Sigma}{\OO{L}{}}$ since $L/K$ is tame. Therefore \eqref{eq:h0h1ri} reads
\begin{equation}\label{eq:h0h1rirmk}
\hht{0}{D}{\OO{L}{}}\hht{1}{D}{\OO{L}{}} = \hht{0}{\Rho}{\OO{L}{}}
\end{equation}
in this case. In particular, since $\hht{0}{\Rho}{\OO{L}{}}$ is not a square, we immediately see that $\hht{0}{D}{\OO{L}{}} \ne \hht{1}{D}{\OO{L}{}}$. We can in fact easily compute $\hht{0}{D}{\OO{L}{}}$ using the same idea as above: here the formula from differents in a tower of extensions gives $v_{\mathfrak{P}}(\diff{L/\Q})=7$, so that $v_3(\tr{L/\Q}(\OO{L}{}))=1$. Therefore $\hht{0}{D}{\OO{L}{}}=3$ and, by \eqref{eq:h0radical} and \eqref{eq:h0h1rirmk}, $\hht{1}{D}{\OO{L}{}}=1$. Thanks to Theorem \ref{th:rif}, we have
\[
\hht{i}{D}{\OO{L}{}} = \begin{cases}
1 & i \equiv 1,2 \pmod{4} \\
3 & i \equiv 0,3 \pmod{4}.
\end{cases}
\] 
\end{remark}

\subsection{Units of number fields}
We now focus on the case of units of number fields and retrieve some of the main results of \cite{CaputoNuccio2019} and \cite{Nuccio2025}. Let $S$ be a  Galois-stable finite set of places of $L$, containing the archimedean ones. For any subgroup $H\leq D$, we denote by $\OO{L^H}{S}$ the ring of $S(L^H)$-integers of $L^H$, where $S(L^H)$ is the set of places of $L^H$ which lie below places of $S$. For a subgroup $H\leq D$, let $\cl{L^H}{S}$ denote the $S(L^H)$-class number of $L^H$, \ie the order of the $S(L^H)$-class group of $L^H$, and $\units{L^H}{S}$ the group of $S(L^H)$-units of $L^H$, which is a finitely generated $\Z$-module with 
\begin{equation}\label{eq:dirichletSunits}
\rk{\units{L^H}{S}} = \card{S(L^H)} - 1.
\end{equation}
For a place $v$ in $S(L^H)$, let $d_v(L/L^H)$ denote the local degree of $v$ in $L/L^H$ (\ie the order of the decomposition group of $v$ in $L/L^H$). 

The $\Rho$-Herbrand quotient of $S$-units is known explicitly:
\begin{equation}\label{eq:hqsunits}
\hq{\Rho}{\units{L}{S}}= \frac{1}{q}\prod_{w\in S(F)} d_w(L/F)
\end{equation}
(see \cite[Chapter IX, \S4, Corollary 2]{LangANT}).
 
\begin{theorem}[{\cite[Theorem 3.14]{CaputoNuccio2019}, \cite[Corollary 2.13]{Nuccio2025}}]\label{th:suf} 
Let $L/k$ be a Galois extension of number fields with Galois group isomorphic to $D$ and set $K=L^\Sigma$ and $F=L^\Rho$. Let $S$ be a Galois-stable finite set of places of $L$, containing the archimedean ones. Then
\[
\frac{\cl{L}{S}\left(\cl{k}{S}\right)^2}{\cl{F}{S}\left(\cl{K}{S}\right)^2} = \frac{\hht{0}{D}{\units{L}{S}}}{\hht{-1}{D}{\units{L}{S}}}\frac{\hht{-1}{\Sigma}{\units{L}{S}}}{\hht{0}{\Sigma}{\units{L}{S}}} \left(\prod_{v\in S_s(k)} d_v(L/k)\right)^{-1}
\] 
where $S_s(k)$ denote the subset of $S(k)$ made of those places which split in $F/k$.
\end{theorem}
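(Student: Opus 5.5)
The plan is to apply Theorem \ref{th:dihedralrccohom} to $M=\units{L}{S}$ and to compare the result with the $S$-version of the Brumer--Stark/Bartel--de Smit formula for regulator constants of $S$-units.

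\textbf{Step 1: the arithmetic side.} First recall (as in \cite[(2.1)]{BarteldeSmit2013}, in its $S$-unit form) that
\[
\rc{\Theta}{\units{L}{S}}\rc{\Theta}{\Z}^{-1}\rc{\Theta}{\Z[S]} = \prod_{H\leq D}\left(\frac{\card{\roots{L^H}}}{\cl{L^H}{S}}\right)^{2n_H}.
\]
This comes from the Brauer relation $\zeta_L\zeta_k^2=\zeta_F\zeta_K^2$ together with the analytic class number formula at $s=0$. Here $\Theta=1+2D-\Rho-2\Sigma$, so the class number factor is $\bigl(\cl{L}{S}(\cl{k}{S})^2/\cl{F}{S}(\cl{K}{S})^2\bigr)^{-2}$.

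\textbf{Step 2: the roots of unity factor.} Apply Proposition \ref{prop:finitemoduledihedral} to the finite module $\roots{L}$. Since $\roots{L}^H=\roots{L^H}$, the left side of \eqref{eq:drcfm} is exactly the roots of unity ratio. Alternatively, note that $\roots{L}$ is cyclic, so the ratio can be computed directly.

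\textbf{Step 3: the permutation-module terms.} Compute $\rc{\Theta}{\Z}$ and $\rc{\Theta}{\Z[S]}$ using Theorem \ref{th:dihedralrccohom}, since $\Z[S]$ and $\Z$ are permutation modules and their Tate cohomology is explicit via Shapiro. Decompose $\Z[S]=\bigoplus_{v\in S(k)}\Z[D/D_v]$, with $D_v$ the decomposition group. For each orbit, $\HHT{0}{H}{\Z[D/D_v]}$ and $\HHT{-1}{H}{\Z[D/D_v]}$ are computed by Mackey and Shapiro as products of $\HHT{i}{H\cap gD_vg^{-1}}{\Z}$. The $-1$ terms vanish and the $0$ terms have orders $\card{H\cap gD_vg^{-1}}$. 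Running over the possible $D_v\in\{1,\Sigma,\Rho',D\}$ up to conjugacy, with $\Rho'$ a subgroup of $\Rho$, I expect only places whose decomposition group is contained in $\Rho$ to contribute. These are the places that split in $F/k$, and they contribute $d_v(L/k)^{\pm 2}$. For $\Z$ one gets $\hht{0}{\Theta}{\Z}=(2q)^2/(q\cdot 4)=q$.

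\textbf{Step 4: combining.} Theorem \ref{th:dihedralrccohom} gives
\[
\rc{\Theta}{\units{L}{S}}=\hht{-1}{\Theta}{\units{L}{S}}/\hht{0}{\Theta}{\units{L}{S}}.
\]
Take square roots, which is legitimate since all quantities are positive, after combining the squares from the $D$ and $\Sigma$ factors. The remaining $\Rho$-factor is the Herbrand quotient $\hq{\Rho}{\units{L}{S}}$, and I cancel it against the $\Rho$-contribution of $\Z[S]$ and $\Z$ using \eqref{eq:hqsunits}. This leaves
\[
\frac{\hht{0}{D}{}\,\hht{-1}{\Sigma}{}}{\hht{-1}{D}{}\,\hht{0}{\Sigma}{}}\Bigl(\prod_{S_s(k)}d_v\Bigr)^{-1}.
\]
The sign $\Sigma$-term has exponent $-2$ in $\Theta$, which after the square root produces the ratio $\hht{-1}{\Sigma}{}/\hht{0}{\Sigma}{}$ as stated.

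\textbf{Main obstacle.} The hardest part is the bookkeeping in Step 3. I have to match exactly the local-degree products coming from $\Z[S]$, the factor $1/q$ in \eqref{eq:hqsunits}, and $\rc{\Theta}{\Z}$, so that only $\prod_{v\in S_s(k)}d_v(L/k)$ survives with exponent $-1$. Here I would lean on \cite[Lemma 2.9]{CaputoNuccio2019} for the $2$-part, and handle the $q$-part via the dihedral decomposition $\HHT{i}{\Rho}{}\cong\HHT{i}{D}{}\oplus\HHT{i+2}{D}{}$.
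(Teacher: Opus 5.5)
Your architecture matches the paper's: the identity \eqref{eq:rccn}, triviality of the roots-of-unity ratio, explicit values of $\rc{\Theta}{\Z}$ and $\rc{\Theta}{\Z[S]}$, and then Theorem \ref{th:dihedralrccohom} for $\units{L}{S}$ combined with \eqref{eq:hqsunits}. Getting $\rc{\Theta}{\Z[X]}=\hht{0}{\Theta}{\Z[X]}^{-1}$ for permutation modules from Theorem \ref{th:dihedralrccohom} (since $\HHT{-1}{H}{\Z[X]}=0$) instead of from Proposition \ref{prop:rcdual1st}, and then counting double cosets, is fine. It is even simpler than the paper's $\ell$-adic route.

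The gap is that the outcome you predict for Step 3, which you yourself flag as the crux, is backwards. With it, Step 4 does not close.
\begin{itemize}
\item If $H_v\leq\Rho$ has order $d$, then $\hht{0}{D}{\Z[D/H_v]}=d$ and $\hht{0}{\Rho}{\Z[D/H_v]}=d^2$ (two $\Rho$-orbits, stabilisers $H_v$). Also $\hht{0}{\Sigma}{\Z[D/H_v]}=1$, since $\Sigma$ acts freely. So $\hht{0}{\Theta}{\Z[D/H_v]}=1$: split places contribute nothing to $\rc{\Theta}{\Z[S]}$.
\item Now let $H_v$ be dihedral of order $2d$. This covers $\Sigma$ and $D$, but also the intermediate $D_d$ with $1<d<q$, which your list omits when $q$ is composite. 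Here $\hht{0}{D}{\Z[D/H_v]}=2d$ and $\hht{0}{\Rho}{\Z[D/H_v]}=d$ (one orbit). Also $\hht{0}{\Sigma}{\Z[D/H_v]}=2$, because $\sigma$ has exactly one fixed point on $D/H_v$. So $\hht{0}{\Theta}{\Z[D/H_v]}=d$.
\end{itemize}
Hence $\rc{\Theta}{\Z[S]}=\prod_{v\in S(k)\smallsetminus S_s(k)}\bigl(d_v(L/k)/2\bigr)^{-1}$, which is \eqref{eq:rczs}. It is exactly the non-split places that contribute.

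The factor $\prod_{v\in S_s(k)}d_v(L/k)$ in the statement comes from the Herbrand quotient, not from $\Z[S]$. A split $v$ has two places $w$ of $F$ above it, each with $d_w(L/F)=d_v(L/k)$. A non-split $v$ has one, with $d_w(L/F)=d_v(L/k)/2$. Therefore
\[
\hq{\Rho}{\units{L}{S}}^{-1}\rc{\Theta}{\Z}\rc{\Theta}{\Z[S]}^{-1}=\frac{q}{\prod_{w\in S(F)}d_w(L/F)}\cdot\frac{1}{q}\cdot\prod_{v\in S(k)\smallsetminus S_s(k)}\frac{d_v(L/k)}{2}=\prod_{v\in S_s(k)}d_v(L/k)^{-2},
\]
and the square root yields the exponent $-1$. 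With your guess (split places contributing $d_v^{\pm2}$ to $\rc{\Theta}{\Z[S]}$, non-split ones nothing), the factor $\prod_{v\notin S_s(k)}d_v(L/k)/2$ inside $\hq{\Rho}{\units{L}{S}}$ stays uncancelled. You would then arrive at a wrong formula.

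A smaller point: in Step 2 you need the ratio to equal $1$, not merely $\hht{0}{D}{\roots{L}}/\hht{-1}{D}{\roots{L}}$. Either use the last assertion of Proposition \ref{prop:finitemoduledihedral} ($\roots{L}\cong\pdual{\roots{L}}$ because $D$ acts through $D/\Rho$ of order $2$). Or observe directly that $\Rho=[D,D]$ acts trivially on $\roots{L}$, so $\roots{F}=\roots{L}$ and $\roots{K}=\roots{k}$.
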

\begin{proof}
By \cite[Proposition 2.15]{Bartel2012} we have
\[
\rc{\Theta}{\units{L}{S}}\rc{\Theta}{\Z}^{-1}\rc{\Theta}{\Z[S]} = \prod_{H\leq D} (\reg{L^H}{S})^{2n_H}
\]
where $\reg{L^H}{S}$ is the regulator of the $S$-units of $L^H$. Since $\Theta$ is a Brauer relation, the formalism of the Artin $L$-functions gives a relation between Dedekind zeta functions:
\[
\prod_{H\leq D} \zeta_{L^H}^{n_H} = 1.
\] 
Using the formula for the residue of the Dedekind zeta function at $0$, we deduce that
\begin{equation}\label{eq:rccn} 
\rc{\Theta}{\units{L}{S}}\rc{\Theta}{\Z}^{-1}\rc{\Theta}{\Z[S]} = \prod_{H\leq D} \left(\frac{\card{\roots{L^H}}}{\cl{L^H}{S}}\right)^{2n_H}
\end{equation}
where $\roots{L^H}=\tors{\units{L^H}{S}}$ is the group of roots of unity of $L^H$. Now
\begin{equation}\label{eq:rootsinrelations}
\prod_{H\leq D} \card{\roots{L^H}}^{n_H}= 1
\end{equation}
(see for instance \cite[Lemma 3.17]{CaputoNuccio2019}) and 
\begin{align}
\rc{\Theta}{\Z} &= \prod_{H\leq G}\hht{0}{H}{\Z}^{-n_H} = q^{-1} \label{eq:rcz}\\
\rc{\Theta}{\Z[S]}&= \prod_{H\leq D} \left(\hht{0}{H}{\Z[S]}\right)^{-n_H} 
\end{align}
by Proposition \ref{prop:rcdual1st}, using that both $\Z$ and $\Z[S]$ are self-dual. To make $\rc{\Theta}{\Z[S]}$ more explicit, observe that $v_2(\rc{\Theta}{\Z[S]})=0$ by Remark \ref{rmk:dihedralrcval} and, if $\ell$ is an odd prime,
\begin{align*}
v_\ell(\rc{\Theta}{\Z[S]})&= v_\ell(\hht{0}{\Rho}{\Z[S]})-2v_\ell(\hht{0}{D}{\Z[S]}) \\
&= v_\ell(\hht{0}{\Rho}{\Z_\ell[S]})-2v_\ell(\hht{0}{D}{\Z_\ell[S]}) \quad \text{(by \eqref{eq:zlcohom})}\\
&= v_\ell(\hht{0}{D}{\Z_\ell[S]}) + v_\ell(\hht{2}{D}{\Z_\ell[S]}) -2v_\ell(\hht{0}{D}{\Z_\ell[S]}) \quad \text{(by \cite[Proposition 2.1]{CaputoNuccio2019})}\\
&= v_\ell(\hht{2}{D}{\Z_\ell[S]}) - v_\ell(\hht{0}{D}{\Z_\ell[S]}).
\end{align*}
Now $\Z_\ell[S]=\bigoplus_{v\in S(k)} \mathrm{Ind}_{H_v}^D\Z_\ell$, where $H_v$ is a decomposition group of $v\in S(k)$ in $D$, so that Shapiro's lemma implies 
\begin{align*}
\HHT{0}{D}{\Z_\ell[S]}& = \bigoplus_{v\in S(k)} \HHT{0}{H_v}{\Z_\ell} \cong \bigoplus_{v\in S(k)} \Z_\ell/\card{H_v}\\
\HHT{2}{D}{\Z_\ell[S]}& = \bigoplus_{v\in S(k)}  \HHT{2}{H_v}{\Z_\ell} \cong \bigoplus_{v\in S(k)}\HHT{1}{H_v}{\Q_\ell/\Z_\ell} = \bigoplus_{v\in S(k)}\pdual{(H_v^{ab})}\otimes_\Z\Z_\ell.
\end{align*}
Observe that $\card{H_v^{ab}}=2$ if $v$ does not split in $F/k$ (so in this case $\pdual{(H_v^{ab})}\otimes_\Z\Z_\ell$ is trivial) and $H_v^{ab}= H_v$ otherwise. 
We therefore have 
\begin{equation}\label{eq:rczs}
\rc{\Theta}{\Z[S]} = \left(\prod_{v \in S(k)\smallsetminus S_s(k)} \frac{d_v(L/k)}{2}\right)^{-1}.
\end{equation}
Then Theorem \ref{th:dihedralrccohom} and \eqref{eq:hqsunits} give
\begin{align*}
\left(\frac{\cl{L}{S}\left(\cl{k}{S}\right)^2}{\cl{F}{S}\left(\cl{K}{S}\right)^2}\right)^{2} &= \rc{\Theta}{\units{L}{S}}^{-1}\rc{\Theta}{\Z}\rc{\Theta}{\Z[S]}^{-1}\\
&=\left(\frac{\hht{0}{D}{\units{L}{S}}}{\hht{-1}{D}{\units{L}{S}}}\frac{\hht{-1}{\Sigma}{\units{L}{S}}}{\hht{0}{\Sigma}{\units{L}{S}}}\right)^2 \hq{\Rho}{\units{L}{S}}^{-1} q^{-1}\prod_{v\in S(k)\smallsetminus S_s(k)} \frac{d_v(L/k)}{2}\\
&=\left(\frac{\hht{0}{D}{\units{L}{S}}}{\hht{-1}{D}{\units{L}{S}}}\frac{\hht{-1}{\Sigma}{\units{L}{S}}}{\hht{0}{\Sigma}{\units{L}{S}}}\right)^2    \left(\prod_{v\in S_s(k)} d_v(L/k)\right)^{-2}.
\end{align*}
Here we have used that 
\[
\prod_{w\in S(F)} d_w(L/F) = \left(\prod_{v\in S(k)\smallsetminus S_s(k)}\frac{d_v(L/k)}{2}\right)
\left(\prod_{v\in S_s(k)}d_v(L/k)\right)^2.\qedhere
\]
\end{proof}

Using Theorem \ref{th:bounds} one obtains bounds on the $S$-class number ratio (see \cite[Corollary 3.15]{CaputoNuccio2019}). For a number field $E$, we denote by $r_1(E)$ and $r_2(E)$ the number of real embeddings and conjugate pairs of complex embeddings of $E$, respectively.

\begin{corollary}\label{cor:cnbounds} 
In the same setting of Theorem \ref{th:suf}, we have, for a prime $\ell \mid q$,
\[
-A(L/k,S,\ell) \leq v_\ell\left(\frac{\cl{L}{S}\left(\cl{k}{S}\right)^2}{\cl{F}{S}\left(\cl{K}{S}\right)^2}\right) \leq B(L/k,S,\ell)
\]
where
\begin{align*}
A(L/k,S,\ell) &= \card{S(F)}v_\ell(q) + v_\ell(\card{\roots{F}/q}) - d_\ell(S(k))\\
B(L/k,S,\ell) &= (\card{S(k)}-1)v_\ell(q) + v_\ell(\card{\roots{k}/q}) - d_\ell(S_s(k))
\end{align*}
and
\begin{align*}
d_\ell(S_s(k)) &= \sum_{v\in S_s(k)} v_\ell(d_v(L/k))\\
d_\ell(S(k)) &= \sum_{v\in S(k)} v_\ell(d_v(L/k)).
\end{align*}
\end{corollary}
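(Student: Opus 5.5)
The plan is to take $\ell$-adic valuations in the identity established in the proof of Theorem \ref{th:suf} and feed in the bounds of Theorem \ref{th:bounds} for $M=\units{L}{S}$. That proof (combining \eqref{eq:rccn}, \eqref{eq:rootsinrelations}, \eqref{eq:rcz} and \eqref{eq:rczs}) gives
\[
\left(\frac{\cl{L}{S}\left(\cl{k}{S}\right)^2}{\cl{F}{S}\left(\cl{K}{S}\right)^2}\right)^{2} = \rc{\Theta}{\units{L}{S}}^{-1}\rc{\Theta}{\Z}\rc{\Theta}{\Z[S]}^{-1}.
\]
Fix a prime $\ell\mid q$; since $q$ is odd, $\ell$ is odd. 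Then $v_\ell(\rc{\Theta}{\Z})=-v_\ell(q)$ by \eqref{eq:rcz}. By \eqref{eq:rczs}, and because $v_\ell(d_v(L/k)/2)=v_\ell(d_v(L/k))$ for odd $\ell$, we get $v_\ell(\rc{\Theta}{\Z[S]})=-(d_\ell(S(k))-d_\ell(S_s(k)))$. Writing $X$ for the class number ratio, this yields
\[
2v_\ell(X)=-v_\ell\bigl(\rc{\Theta}{\units{L}{S}}\bigr)-v_\ell(q)+d_\ell(S(k))-d_\ell(S_s(k)).
\]

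Next I would apply Theorem \ref{th:bounds} to $M=\units{L}{S}$, which gives $-L_\ell(M)\le v_\ell(\rc{\Theta}{M})\le U_\ell(M)$. To make these bounds explicit I need three inputs.
\begin{itemize}
\item The fixed modules are $M^D=\units{k}{S}$ and $M^\Rho=\units{F}{S}$, with torsion $\roots{k}$ and $\roots{F}$ respectively.
\item By \eqref{eq:dirichletSunits} their ranks are $\card{S(k)}-1$ and $\card{S(F)}-1$.
\item The Herbrand quotient is read off from \eqref{eq:hqsunits}, together with the decomposition of $\prod_{w\in S(F)}d_w(L/F)$ recorded at the end of the proof of Theorem \ref{th:suf}. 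Using again that $\ell$ is odd, this gives $v_\ell(\hq{\Rho}{M})=-v_\ell(q)+d_\ell(S(k))+d_\ell(S_s(k))$.
\end{itemize}

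Finally, substitute. The lower bound $v_\ell(\rc{\Theta}{M})\ge -L_\ell(M)$ gives an upper bound on $2v_\ell(X)$. In it the $d_\ell(S(k))$ terms and the stray $v_\ell(q)$ terms cancel, leaving
\[
2\bigl(v_\ell(\card{\roots{k}/q})+(\card{S(k)}-1)v_\ell(q)-d_\ell(S_s(k))\bigr)=2B(L/k,S,\ell).
\]
Symmetrically, the upper bound $v_\ell(\rc{\Theta}{M})\le U_\ell(M)$ gives
\[
2v_\ell(X)\ge -2\bigl(v_\ell(\card{\roots{F}/q})+\card{S(F)}v_\ell(q)-d_\ell(S(k))\bigr)=-2A(L/k,S,\ell).
\]
Dividing by $2$ gives the statement.

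There is no conceptual obstacle here; the argument is bookkeeping. The only delicate point is the Herbrand-quotient valuation: one must correctly split $S(F)$ into places over non-split places of $k$ (one place, local degree $d_v/2$) and over split places (two places, local degree $d_v$), and check the sign conventions so that the $d_\ell$ terms cancel as claimed.
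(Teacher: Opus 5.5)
Your proposal is correct and follows the paper's proof: take $\ell$-adic valuations in the identity coming from \eqref{eq:rccn} and \eqref{eq:rootsinrelations}, bound the unit term with Theorem \ref{th:bounds} (using \eqref{eq:dirichletSunits} and \eqref{eq:hqsunits}), and evaluate the $\Z$ and $\Z[S]$ terms exactly via \eqref{eq:rcz} and \eqref{eq:rczs}. Your explicit bookkeeping is right and fills in details the paper leaves implicit. This includes $v_\ell(\hq{\Rho}{\units{L}{S}})=-v_\ell(q)+d_\ell(S(k))+d_\ell(S_s(k))$ and the cancellations that produce exactly $2B(L/k,S,\ell)$ and $-2A(L/k,S,\ell)$.
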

\begin{proof}
By \eqref{eq:rccn} and \eqref{eq:rootsinrelations} we need to find bounds for  
\[
-v_\ell(\rc{\Theta}{\units{L}{S}} + v_\ell(\rc{\Theta}{\Z}) - v_\ell(\rc{\Theta}{\Z[S]}).
\] 
To find bounds for the first of the above three terms we use Theorem \ref{th:bounds}, \eqref{eq:dirichletSunits} and \eqref{eq:hqsunits}. The second and third terms have closed form formulas thanks to \eqref{eq:rcz} and \eqref{eq:rczs}, respectively.
\end{proof}

\begin{remark}
The bounds in Corollary \ref{cor:cnbounds} are slightly tighter than those of  \cite[Corollary 3.15]{CaputoNuccio2019}. In fact, for a number field $E$, let
$\defect{E}{q}$ be $0$ or $1$ depending on whether $\roots{E}/q$ is trivial or not.
In \cite[Corollary 3.15]{CaputoNuccio2019}, mainly for notational convenience, the terms $v_\ell(\card{\roots{k}/q})$ and $v_\ell(\card{\roots{F}/q})$ in Corollary \ref{cor:cnbounds} are replaced by $\defect{k}{q}v_\ell(q)$ and $\defect{F}{q}v_\ell(q)$, respectively, and one always has $\defect{E}{q}v_\ell(q) \geq v_\ell(\card{\roots{E}/q})$ (since $\roots{E}$ is cyclic).      	
\end{remark}

\begin{remark}
One can obtain a formula similar to that of Theorem \ref{th:suf} for $K$-groups of ring of integers (up to powers of $2$). In this case \eqref{eq:rccn} is replaced by \cite[(2.3)]{BarteldeSmit2013} and \eqref{eq:rootsinrelations} by \cite[Proposition 2.4]{BarteldeSmit2013}. Moreover the $\Rho$-Herbrand quotient of odd-numbered $K$-groups is known to be trivial, since $\Rho$ has odd order (see \cite[Remarque 4.3]{Kahn1993}). One can also deduce an analogue of the bounds of Corollary \ref{cor:cnbounds} in terms of the ranks of odd-numbered $K$-groups (which are known explicitly, see \cite[Theorem 6]{Weibel2005}). Applying the same idea motivic cohomology groups, one can recover and generalize the bounds of \cite[Proposition 2.4]{zhou2015}. 
\end{remark}

\subsection{Elliptic curves}
Finally, we analyse the case of elliptic curves. For an elliptic curve $E$ defined over the number field $k$ and a subgroup $H\leq D$, $\rp{L^H}$ is the finitely generated $\Z$-module of $L^H$-rational points of $E$. Moreover we let $\tn{L^H}$ denote the product of suitably normalised Tamagawa numbers over all finite places of $L^H$ (see \cite[p. 571]{DokchitserDokchitser2010} for the exact definition of such normalisation) and $\ts{L^H}$ the Tate-Shafarevic group of $E/L^H$. 

Now let $q = \prod_\ell \ell^{t(\ell)}$ be the prime decomposition of $q$. Then Lemma \ref{lemma:rosen} in the context of elliptic curves reads
\begin{equation}\label{eq:hqec}
v_\ell(\hq{\Rho}{\rp{L}}) = t(\ell)\rpr{F} - \sum_{i=1}^{t(\ell)}\frac{\rpr{L(\ell^i)}-\rpr{L(\ell^{i-1})}}{\phi(\ell^i)}
\end{equation}
where, for a divisor $d$ of $q$, $L(d)/F$ is the subextension of $L/F$ of degree $d$.

\begin{theorem}\label{th:ecf} 
Let $L/k$ be a Galois extension of number fields with Galois group isomorphic to $D$ and set $K=L^\Sigma$ and $F=L^\Rho$. Let $E$ be an elliptic curve defined over $k$ and assume $\ts{L^H}$ is finite for all $H\leq D$. Then
\[
\prod_{H\leq D}\left(\frac{\card{\tors{\rp{L^H}}}^2}{\tn{L^H}\card{\ts{L^H}}}\right)^{n_H} =\left(\hht{0}{\Theta}{\rp{L}}\hht{1}{\Theta}{\rp{L}}\right)^{-1} =\frac{\hht{-1}{\Theta}{\rp{L}}}{\hht{0}{\Theta}{\rp{L}}}.
\] 
\end{theorem}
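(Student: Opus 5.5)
The plan is to combine Theorem~\ref{th:dihedralrccohom}, applied to the finitely generated $\Z[D]$-module $M=\rp{L}$, with the arithmetic expression of $\rc{\Theta}{\rp{L}}$ coming from the Birch--Swinnerton-Dyer compatibility for Brauer relations. The second and third expressions in the statement are equal by the second equality of Theorem~\ref{th:dihedralrccohom} (equivalently, by Lemma~\ref{lemma:dcf}). It therefore suffices to show that the left-hand side equals $\rc{\Theta}{\rp{L}}$.

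First, $\rp{L}$ is a finitely generated $\Z[D]$-module by Mordell--Weil. For every $H\leq D$ we have $\rp{L}^H=\rp{L^H}$, since $E$ is defined over $k\subseteq L^H$, and $\tors{\rp{L}}^H=\tors{\rp{L^H}}$. We use the Néron--Tate height pairing on $\rp{L}$; it is $D$-invariant and non-degenerate on $\mt{\rp{L}}$. By Definition~\ref{def:rc} the $H$-factor of $\rc{\Theta}{\rp{L}}$ is
\[
\card{\tors{\rp{L^H}}}^{-2}\det\left(\tfrac{1}{\card{H}}\langle\cdot,\cdot\rangle_{L}\right)\Big|_{\mt{\rp{L^H}}}.
\]
The height over $L$, restricted to points defined over $L^H$, equals $[L:L^H]=\card{H}$ times the height over $L^H$. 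Hence this factor is $\mathrm{Reg}(E/L^H)/\card{\tors{\rp{L^H}}}^2$, so $\rc{\Theta}{\rp{L}}=\prod_H\bigl(\mathrm{Reg}(E/L^H)/\card{\tors{\rp{L^H}}}^2\bigr)^{n_H}$.

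Next we invoke the result recalled in the Introduction (\cite[Section 2.4]{BarteldeSmit2013}, building on \cite{DokchitserDokchitser2010}). Since $\Theta$ is a Brauer relation, $\prod_H L(E/L^H,s)^{n_H}=1$. The leading-term quotients are related through the BSD-type compatibility (Tate's isogeny invariance of BSD, which only needs finiteness of $\ts{L^H}$), and the real periods cancel in the relation thanks to the chosen normalisation of $\tn{L^H}$. This gives
\[
\prod_H \left(\frac{\mathrm{Reg}(E/L^H)\,\tn{L^H}\card{\ts{L^H}}}{\card{\tors{\rp{L^H}}}^2}\right)^{n_H}=1 .
\]
Combining this with the previous paragraph yields $\rc{\Theta}{\rp{L}}=\prod_H\bigl(\card{\tors{\rp{L^H}}}^2/(\tn{L^H}\card{\ts{L^H}})\bigr)^{n_H}$. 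This is exactly the displayed formula of the Introduction.

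The main obstacle is this arithmetic input: the cancellation of periods and the precise normalisation of the Tamagawa factors. This is where the finiteness hypothesis on $\ts{L^H}$ enters, and we take it from \cite{DokchitserDokchitser2010} and \cite{BarteldeSmit2013} rather than reprove it. The only other point to check is that the normalisation of the height pairing matches the factor $1/\card{H}$ in Definition~\ref{def:rc}, which we verified above. Applying Theorem~\ref{th:dihedralrccohom} to $\rp{L}$, whose torsion is permitted by that theorem, then completes the proof.
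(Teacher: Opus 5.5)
Your route is the paper's: its proof quotes from \cite[(2.5), (2.6)]{BarteldeSmit2013} that the arithmetic product equals $\rc{\Theta}{\rp{L}}$, then applies the dihedral formula. The paper writes Theorem \ref{th:rcformula} there, but Theorem \ref{th:dihedralrccohom}, which you use, is what is needed.

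There is a genuine error in your step ``Combining this with the previous paragraph yields\ldots''. Your second paragraph correctly gives $\rc{\Theta}{\rp{L}}=\prod_H\bigl(\mathrm{Reg}(E/L^H)/\card{\tors{\rp{L^H}}}^2\bigr)^{n_H}$. The BSD compatibility says $\prod_H\mathrm{Reg}(E/L^H)^{n_H}=\prod_H\bigl(\card{\tors{\rp{L^H}}}^2/(\tn{L^H}\card{\ts{L^H}})\bigr)^{n_H}$. Together they give
\[
\rc{\Theta}{\rp{L}}=\prod_{H\leq D}\bigl(\tn{L^H}\card{\ts{L^H}}\bigr)^{-n_H}.
\]
This is not the product with $\card{\tors{\rp{L^H}}}^2$ in the numerator. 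Put $T=\tors{\rp{L}}$, so that $T^H=\tors{\rp{L^H}}$. Then the left-hand side of the statement equals the display above times $\prod_H\card{T^H}^{2n_H}$.

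By Definition \ref{def:rc} and Proposition \ref{prop:finitemoduledihedral}, this factor is $\rc{\Theta}{T}^{-1}=\bigl(\hht{0}{D}{T}/\hht{-1}{D}{T}\bigr)^2$, and it is not $1$ in general. Take $q=3$ and $T=(\Z/3)^2$ with $\rho e_1=e_1$, $\rho e_2=e_1+e_2$, $\sigma e_1=e_1$, $\sigma e_2=-e_2$. This is the Galois module $E[3]$, in a suitable basis, for a curve over $\Q$ with a rational point of order $3$ whose $3$-division field $L$ is an $S_3$-extension. Here $T^\Rho=T^\Sigma=T^D=\langle e_1\rangle$, so $\card{T}\card{T^D}^2/(\card{T^\Rho}\card{T^\Sigma}^2)=3$.

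So the step cannot be repaired: the statement as written is off by this factor. What your ingredients prove, together with Theorem \ref{th:dihedralrccohom}, is
\[
\prod_{H\leq D}\bigl(\tn{L^H}\card{\ts{L^H}}\bigr)^{-n_H}=\bigl(\hht{0}{\Theta}{\rp{L}}\hht{1}{\Theta}{\rp{L}}\bigr)^{-1}=\frac{\hht{-1}{\Theta}{\rp{L}}}{\hht{0}{\Theta}{\rp{L}}}.
\]
The expression with $\card{\tors{\rp{L^H}}}^2$ in the numerator is the correct formula for $\prod_H\mathrm{Reg}(E/L^H)^{n_H}$. That is the regulator constant without the factors $\card{\tors{M}^H}^{-2}$ of Definition \ref{def:rc}. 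The paper's one-line proof carries this identification over to its own normalisation without adjusting, so it has the same defect.

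The discrepancy is a square, so Corollary \ref{cor:parity} survives. The exact identity, and the bounds derived from it, should use the corrected left-hand side above.
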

\begin{proof}
As in the beginning of the proof of Theorem \ref{th:suf}, one obtains a formula 
\[
\prod_{H\leq D}\left(\frac{\card{\tors{\rp{L^H}}}^2}{\tn{L^H}\card{\ts{L^H}}}\right)^{n_H} = \rc{\Theta}{\rp{L}}
\]	
(see \cite[(2.5) and (2.6)]{BarteldeSmit2013}). The result then follows by Theorem \ref{th:rcformula}.
\end{proof}

As an immediate consequence of the above theorem we obtain the following parity result (a particular case of which is contained in \cite[Proposition 4.17]{DokchitserDokchitser2010}).

\begin{corollary}\label{cor:parity}
In the same setting as Theorem \ref{th:ecf}, we have, for a prime $\ell\mid q$,
\[
v_\ell(\tn{F})- v_\ell(\tn{L}) \equiv v_\ell(\hq{\Rho}{\rp{L}}) \pmod 2
\]
and $v_\ell(\hq{\Rho}{\rp{L}})$ is given by \eqref{eq:hqec}.
\end{corollary}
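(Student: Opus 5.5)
Starting from Theorem \ref{th:ecf}, we take $\ell$-adic valuations and reduce modulo $2$. The left-hand side of Theorem \ref{th:ecf} is $\prod_{H}\bigl(\card{\tors{\rp{L^H}}}^2/(\tn{L^H}\card{\ts{L^H}})\bigr)^{n_H}$. The torsion factors appear squared, so they contribute an even valuation and can be dropped mod $2$. For the Tate--Shafarevich groups we use the Cassels--Tate pairing: since $\ts{L^H}$ is assumed finite, $\card{\ts{L^H}}$ is a square, so these also contribute nothing mod $2$. With $\Theta = 1+2D-\Rho-2\Sigma$, the $\tn{}$-part of the valuation is
\[
-v_\ell(\tn{L}) - 2v_\ell(\tn{k}) + v_\ell(\tn{F}) + 2v_\ell(\tn{K}),
\]
which is congruent mod $2$ to $v_\ell(\tn{F})-v_\ell(\tn{L})$.

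On the right-hand side we use the second expression in Theorem \ref{th:ecf}, namely $\hht{-1}{\Theta}{\rp{L}}/\hht{0}{\Theta}{\rp{L}}$. Its $\ell$-adic valuation is
\[
2\bigl(v_\ell(\hht{-1}{D}{\rp{L}})-v_\ell(\hht{0}{D}{\rp{L}})\bigr)+2\bigl(v_\ell(\hht{0}{\Sigma}{\rp{L}})-v_\ell(\hht{-1}{\Sigma}{\rp{L}})\bigr)+v_\ell(\hq{\Rho}{\rp{L}}),
\]
where the last term is $v_\ell$ of $\hht{0}{\Rho}{\rp{L}}/\hht{-1}{\Rho}{\rp{L}}$. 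Here the exponents $n_1=1$ and $n_D=2$ give the signs and factors, and $\HHT{i}{1}{\cdot}$ is trivial. Modulo $2$ only $v_\ell(\hq{\Rho}{\rp{L}})$ survives. (The $\Sigma$-terms even vanish outright for odd $\ell$, since $\Sigma$-cohomology is $2$-torsion, but this is not needed.) Equating the two sides gives the congruence, and \eqref{eq:hqec} (Lemma \ref{lemma:rosen}) supplies the explicit value of $v_\ell(\hq{\Rho}{\rp{L}})$.

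The one non-formal input is that $\card{\ts{L^H}}$ is a square, and this is the step that needs checking. The Cassels--Tate pairing on $\ts$ of an elliptic curve is alternating, so a finite $\ts$ has square order; this holds for every $H$ under the finiteness hypothesis. If one prefers to avoid it, it is enough to note that for $\ell$ odd the $\ell$-part of a finite group carrying a nondegenerate alternating pairing has square order. Everything else is bookkeeping of the exponents $n_H$.
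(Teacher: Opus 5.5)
Your proof is correct and is essentially the paper's argument: you take $v_\ell$ of both sides of Theorem \ref{th:ecf} modulo $2$ (equivalently, reduce modulo $(\Q^\times)^2$), discarding the squared torsion terms, the Tate--Shafarevich groups (of square order by the Cassels--Tate pairing), and the $D$- and $\Sigma$-factors (even exponents $n_D=2$, $n_\Sigma=-2$), so that only $\tn{F}/\tn{L}$ and the $\Rho$-Herbrand quotient survive. Your exponent bookkeeping is in fact slightly more careful than the paper's text, which writes $n_\Sigma = n_{\{\mathrm{id}\}} = 2$ where it should say $n_D=2$ and $n_\Sigma=-2$.
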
 
\begin{proof}
Observe that
\begin{align*}
\prod_{H\leq D}\left(\frac{\card{\tors{\rp{L^H}}}^2}{\tn{L^H}\card{\ts{L^H}}}\right)^{n_H} &\equiv \prod_{H\leq D}\left(\tn{L^H}\right)^{-n_H} \pmod{(\Q^\times)^2}\\
&\equiv \frac{\tn{F}}{\tn{L}} \pmod{(\Q^\times)^2}
\end{align*}
since Tate-Shafarevic groups (assumed to be finite) have square order (see \cite[Chapter X, Theorem 4.14]{Silverman2009}) and $n_\Sigma = n_{\{\mathrm{id}\}}=2$. Similarly we have
\[
\prod_{H\leq D}\left(\frac{\hht{-1}{H}{\rp{L}}}{\hht{0}{H}{\rp{L}}}\right)^{n_H} \equiv \frac{\hht{0}{\Rho}{\rp{L}}}{\hht{-1}{\Rho}{\rp{L}}} \pmod{(\Q^\times)^2}.\qedhere
\]
\end{proof}

\begin{remark}
When $q$ is a prime, we have
\begin{align*}
\frac{q \rpr{F} -\rpr{L}}{q-1} &\equiv v_q(\tn{F})- v_q(\tn{L})  \pmod 2  \quad \text{by Cor. \ref{cor:parity}}\\
\frac{2\left(\rpr{k} - \rpr{K}\right)}{q -1}-\rpr{F} &\equiv v_q(\tn{F})- v_q(\tn{L}) \pmod 2 \quad \text{by \cite[Prop. 4.17]{DokchitserDokchitser2010}}.
\end{align*}
These two congruences are indeed the same since
\[
2(\rpr{k} - \rpr{K}) = \rpr{F} - \rpr{L}
\]
by \cite[p. 572]{DokchitserDokchitser2010}.
\end{remark}

Using Theorem \ref{th:bounds}, one immediately obtains the following bounds.

\begin{corollary}
In the same setting as Theorem \ref{th:ecf}, we have, for a prime $\ell\mid q$,
\[
-L_\ell(E) \leq v_\ell\left(\prod_{H\leq D}\left(\frac{\card{\tors{\rp{L^H}}}^2}{\tn{L^H}\card{\ts{L^H}}}\right)^{n_H}\right)\leq U_\ell(E)
\] 
where 
\begin{align*}
L_\ell(E)& = 2v_\ell(\tors{\rp{k}}/q) + 2\rpr{k}v_\ell(q) - v_\ell(\hq{\Rho}{\rp{L}})\\
U_\ell(E)& = 2v_\ell(\tors{\rp{F}}/q) + 2\rpr{F}v_\ell(q) - v_\ell(\hq{\Rho}{\rp{L}})
\end{align*}
and $v_\ell(\hq{\Rho}{\rp{L}})$ is given by \eqref{eq:hqec}.
\end{corollary}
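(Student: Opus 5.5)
This is a direct specialization of Theorem \ref{th:bounds} to $M=\rp{L}$, a finitely generated $\Z[D]$-module by Mordell--Weil. By Theorem \ref{th:ecf} (its proof identifies the product with $\rc{\Theta}{\rp{L}}$ via \cite[(2.5) and (2.6)]{BarteldeSmit2013}), the quantity whose $\ell$-adic valuation we bound is exactly $\rc{\Theta}{\rp{L}}$. So, for $\ell\mid q$, Theorem \ref{th:bounds} immediately gives
\[
-L_\ell(\rp{L})\leq v_\ell\bigl(\rc{\Theta}{\rp{L}}\bigr)\leq U_\ell(\rp{L}),
\]
and it remains to translate $L_\ell(\rp{L})$ and $U_\ell(\rp{L})$ into the arithmetic invariants appearing in $L_\ell(E)$ and $U_\ell(E)$.

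For this translation I use Galois descent of rational points: $\rp{L}^H=\rp{L^H}$ for every $H\leq D$. With $H=D$ this gives $\rp{L}^D=\rp{k}$, hence $\tors{\rp{L}}^D=\tors{\rp{k}}$ and $\rk{\rp{L}^D}=\rpr{k}$. With $H=\Rho$ it gives $\rp{L}^\Rho=\rp{F}$, hence $\tors{\rp{L}}^\Rho=\tors{\rp{F}}$ and $\rk{\rp{L}^\Rho}=\rpr{F}$. Here I use that taking $H$-invariants commutes with taking torsion, since $\tors{(M^H)}=\tors{M}\cap M^H=(\tors{M})^H$. The term $v_\ell(\hq{\Rho}{\rp{L}})$ is common to both bounds and is given by Lemma \ref{lemma:rosen}. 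In the form \eqref{eq:hqec} this uses $r(d)=\rk{\rp{L}^{\Rho_d}}=\rpr{L^{\Rho_d}}$, where $\Rho_d$ is the subgroup of $\Rho$ of order $d$ and $L^{\Rho_d}$ is the subextension of $L/F$ of degree $q/d$.

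Substituting these identifications into $L_\ell$ and $U_\ell$ yields exactly $L_\ell(E)$ and $U_\ell(E)$, reading $v_\ell(\tors{\rp{k}}/q)$ as $v_\ell(\card{\tors{\rp{k}}/q})$. There is no real obstacle. The only points requiring care are, first, the notational matching in \eqref{eq:hqec} between $L(d)$, indexed by degree over $F$, and $r(d)$, indexed by the order of the fixing subgroup. Second, the hypothesis that the relevant Tate--Shafarevic groups $\ts{L^H}$ are finite, needed for Theorem \ref{th:ecf}, is in force by the standing assumptions.
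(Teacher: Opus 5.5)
Your proposal is correct and is exactly the paper's argument; the paper only says that the bounds follow immediately from Theorem \ref{th:bounds}. You identify the product with $\rc{\Theta}{\rp{L}}$ via Theorem \ref{th:ecf}, apply Theorem \ref{th:bounds} to $M=\rp{L}$, and translate the invariants using $\rp{L}^H=\rp{L^H}$ and $\tors{(M^H)}=(\tors{M})^H$.

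One caveat concerns the identification you, like the paper, take on trust from Theorem \ref{th:ecf}, and it is not a defect of your argument relative to the paper's. Definition \ref{def:rc} builds in the factor $\card{\tors{M}^H}^{-2}$, while the regulator formula quoted from \cite{BarteldeSmit2013} concerns only the determinants on $\mt{\rp{L^H}}$. Combining them as I understand them gives $\rc{\Theta}{\rp{L}}=\prod_{H\leq D}\bigl(\tn{L^H}\card{\ts{L^H}}\bigr)^{-n_H}$. So the identification as written appears to hold only up to the factor $\prod_{H\leq D}\card{\tors{\rp{L^H}}}^{2n_H}$, which is worth double-checking.
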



\printbibliography
\end{document}